\begin{document}

\bookmarksetup{startatroot}

\title{Integer Complexity: Experimental and Analytical Results II}
\author{\texorpdfstring{Juris Čerņenoks\inst{1} \and Jānis Iraids\inst{1} \and Mārtiņš Opmanis\inst{2} \and Rihards Opmanis\inst{2} \and Kārlis Podnieks\inst{1}}{Juris Čerņenoks, Jānis Iraids, Mārtiņš Opmanis, Rihards Opmanis, Kārlis Podnieks}}

\institute{University of Latvia, Raiņa bulvāris 19, Riga, LV-1586, Latvia \and
Institute of Mathematics and Computer Science, University of Latvia, Raiņa bulvāris 29, Riga, LV-1459, Latvia}

\theoremstyle{plain}
\newtheorem{hypo}{Hypothesis}
\newtheorem{cor}{Corollary}
\newtheorem{que}{Question}
\newtheorem{obs}{Observation}

\newcommand{\f}[1]{ \left\| #1 \right\| }
\newcommand{\fl}[1]{ \f{ #1 }_{\log} }
\newcommand{\fm}[1]{ \f{ #1 }_{-} }
\newcommand{\fml}[1]{ \f{ #1 }_{-\log} }
\newcommand{\fp}[1]{ \f{ #1 }_{P} }
\newcommand{\fpl}[1]{ \f{ #1 }_{P,\log} }

\maketitle

\bookmarksetup{startatroot}

\begin{abstract}
We consider representing of natural numbers by expressions using 1's, addition, multiplication and parentheses. $\f{n}$ denotes the minimum number of 1's in the expressions representing $n$. The logarithmic complexity $\fl{n}$ is defined as ${\f{n}}/{\log_3 n}$. The values of $\fl{n}$ are located in the segment $[3, 4.755]$, but almost nothing is known with certainty about the structure of this ``spectrum'' (are the values dense somewhere in the segment etc.). We establish a connection between this problem and another difficult problem: the seemingly ``almost random'' behaviour of digits in the base 3 representations of the numbers $2^n$.

We consider also representing of natural numbers by expressions that include subtraction, and the so-called $P$-algorithms - a family of ``deterministic'' algorithms for building representations of numbers.

\keywords{integer complexity, logarithmic complexity, spectrum, powers of two, ternary representations, randomness of pi}
\end{abstract}

\pdfbookmark[1]{1. Introduction}{intro}
\section{Introduction}
\label{intro}

The field explored in this paper is represented in ``The On-Line Encyclopedia of Integer Sequences'' as the sequences A005245 \cite{oeisA005245} and A091333 \cite{oeisA091333}. The topic seems gaining  popularity - see \cite{altman}, \cite{altman2}, \cite{altman3}, \cite{steinerberger}, \cite{arias2}.

The paper continues our previous work \cite{paper1}.

First, in Section \ref{int} we consider representing of natural numbers by arithmetical expressions using 1's, addition, multiplication and parentheses. Let's call this ``representing numbers in basis $\{1,+,\cdot\}$''. 

\begin{definition}
Let's denote by $\f{n}$ the \textbf{minimum} number of 1's in the expressions representing $n$ in basis $\{1,+,\cdot\}$. We will call it the \textbf{integer complexity} of $n$. The logarithmic complexity $\fl{n}$ is defined as $\frac{\f{n}}{\log_3 n}$.
\end{definition}
It is well known that all the values of $\fl{n}$ are located in the segment $[3, 4.755]$, but almost nothing is known with certainty about the structure of this ``spectrum'' (are the values dense somewhere in the segment etc.). We establish a connection between this problem and another difficult problem: the seemingly ``almost random'' behaviour of digits in the base 3 representations of the numbers $2^n$. 

Secondly, in Section \ref{minus} we consider representing of natural numbers by arithmetical expressions that include also \textbf{subtraction}. Let's call this ``representing numbers in basis $\{1,+,\cdot, -\}$''. 

\begin{definition}
\label{compl_minus}
Let's denote by $\fm{n}$ the \textbf{minimum} number of 1's in the expressions representing $n$ in basis $\{1, +, \cdot, -\}$. The logarithmic complexity $\fml{n}$ is defined as $\frac{\fm{n}}{\log_3 n}$.
\end{definition}

We prove that almost all values of the logarithmic complexity $\fml{n}$ are located in the segment $[3, 3.679]$. Having computed $\fm{n}$ up to $n=2 \cdot 10^{11}$, we present some of our observations.  

In Section \ref{palgo} we explore the so-called $P$-algorithms - a family of ``deterministic'' algorithms for building representations of numbers in basis $\{1,+,\cdot\}$. ``Deterministic'' means that these algorithms do not use searching over trees, but are building expressions directly from the numbers to be represented.

Let $P$ be a non-empty finite set of primes, for example, $P=\{2\}$, or $P=\{5, 11\}$. $P$-algorithm is building an expression of a number $n>0$ in basis $\{1,+,\cdot\}$ by subtracting 1's and by dividing (whenever possible) by primes from the set $P$. We explore the spectrum of the logarithmic complexity $\fpl{n} = \frac{\fp{n}}{\log_3 n}$.

\pdfbookmark[1]{2. Integer complexity in basis $1, +, .$}{int}
\section{Integer complexity in basis $\{1,+,\cdot\}$}
\label{int}

\pdfbookmark[2]{2.1. Connections to sum-of-digits problem}{conn}
\subsection{Connections to sum-of-digits problem}
\label{subsec:conn}

Throughout this subsection, we assume that $p, q$ are positive integers such that  $\frac{\log{p}} {\log{q}}$ is irrational, i.e., $p^a \neq q^b$ for any integers $a, b>0$.

\begin{definition}
Let us denote by $D_q(n, i)$ the \textbf{$i$-th digit} in the canonical base $q$ representation of the number $n$, and by $S_q(n)$ - the \textbf{sum of digits} in this representation. 
\end{definition}

Let us consider base $q$ representations of powers $p^n$. Imagine, for a moment (somewhat incorrectly), that, for fixed $p, q, n$,  the digits  $D_q(p^n, i)$ behave like as statistically independent random variables taking the values $0, 1, ..., q-1$ with equal probabilities $\frac{1}{q}$. Then, the (pseudo) mean value and (peudo) variance of $D_q(p^n, i)$ would be
 \[E=\frac{q-1}{2}; V=\sum\limits_{i=0}^{q-1}\frac{1}{q}\left(i-\frac{q-1}{2}\right)^2=\frac{q^2-1}{12}.\]
The total number of digits in the base $q$ representation of $p^n$ is $k_n \approx n \log_q{p}$, hence, the (pseudo) mean value of the sum $S_q(p^n)=\sum \limits_{i=1}^{k_n} D_q(p^n, i)$ would be $E_n \approx n\frac{q-1}{2}\log_q{p}$ and, because of the assumed (pseudo) independence of digits, its (pseudo) variance would be $V_n \approx n\frac{q^2-1}{12}\log_q{p}$. As the final consequence, the corresponding centered and normed variable $\frac{S_q(p^n)-E_n}{\sqrt{V_n}}$ would behave as a standard normally distributed random variable with probability density $\frac{1}{\sqrt{2\pi}}e^{-\frac{x^2}{2}}$.

One can try verifying this conclusion experimentally. For example, let us compute $S_3(2^n)$ for $n$ up to $100000$, and let us draw the histogram of the corresponding centered and normed variable
\[s_3(2^n)=\frac{S_3(2^n)-n \log_3 2}{\sqrt{n\frac{2}{3}\log_3 2 }}\]
(see Fig. \ref{Histo1}). As we see, this variable behaves, indeed, almost exactly, as a standard normally distributed random variable (the solid curve).

\begin{figure}
    \centering
    \includegraphics[width=0.8\textwidth]{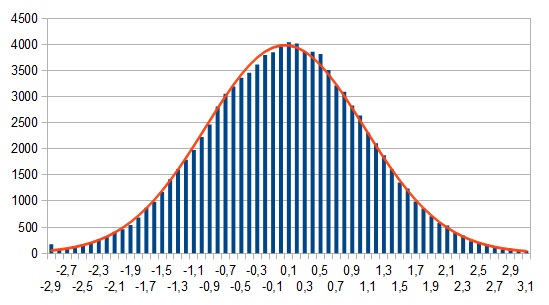}
    \caption{Histogram of centered and normed variable $s_3(2^n)$}
    \label{Histo1}
\end{figure}

Observing such a phenomenon ``out there'', one could conjecture that $S_q(p^n)$, as a function of $n$, behaves almost as $n\frac{q-1}{2}\log_q{p}$, i.e., almost \textbf{linearly} in $n$. Let us try to estimate the amplitude of the possible deviations by ``applying'' the Law of the Iterated Logarithm. Let us introduce centered and normed (pseudo) random variables:
\[d_q(p^n, i) = \frac{D_q(p^n, i)-\frac{q-1}{2}}{\sqrt{\frac{q^2-1}{12}}}.\]
By summing up these variables for $i$ from $1$ to $k_n$, we obtain a sequence of (pseudo) random variables:
\[\kappa_q(p, n) = \frac{S_q(p^n)-\frac{q-1}{2}k_n}{\sqrt{\frac{q^2-1}{12}}},\]
that ``must obey" the Law of the Iterated Logarithm. Namely, if the sequence $S_q(p^n)$ behaves, indeed, as a "typical" sum of equally distributed random variables, then $\lim \limits_{n \to \infty} \inf$ and $\lim \limits_{n \to \infty} \sup$ of the fraction
\[\frac{\kappa_q(p, n)}{\sqrt{2 k_n \log{\log k_n}}},\]
 ($\log$ stands for the natural logarithm) must be $-1$ and $+1$ correspondingly.

Therefore, it seems, we could conjecture that, if we denote
\[\sigma_q(p, n) = \frac{S_q(p^n)-(\frac{q-1}{2}\log_q p)n}{\sqrt{(\frac{q^2-1}{6}\log_q p) n \log{\log n}}},\]
then
\[\lim \limits_{n \to \infty} \sup \sigma_q(p, n)=1; \lim \limits_{n \to \infty} \inf \sigma_q(p, n) = -1\].

In particular, this would mean that
\[S_q(p^n)=(\frac{q-1}{2}\log_q p)n +O(\sqrt{n \log \log n}).\]
By setting $p=2; q=3$ (note that $\log_3 2 \approx 0.6309$):
\[S_3(2^n)=n \cdot \log_3 2 + O(\sqrt{n \log \log n});\]
\[\sigma_3(2, n) =\frac{S_3(2^n)-n \log_3 2}{\sqrt{(\frac{4}{3} \log_3 2) n \log{\log n}}}\approx \frac{S_3(2^n)-0.6309 n}{\sqrt{0.8412 n  \log \log n}},\] 
\[\lim \limits_{n \to \infty} \sup \sigma_3(2, n)=1; \lim \limits_{n \to \infty} \inf \sigma_3(2, n) = -1\].
 
\begin{figure}
    \centering
    \includegraphics[width=0.8\textwidth]{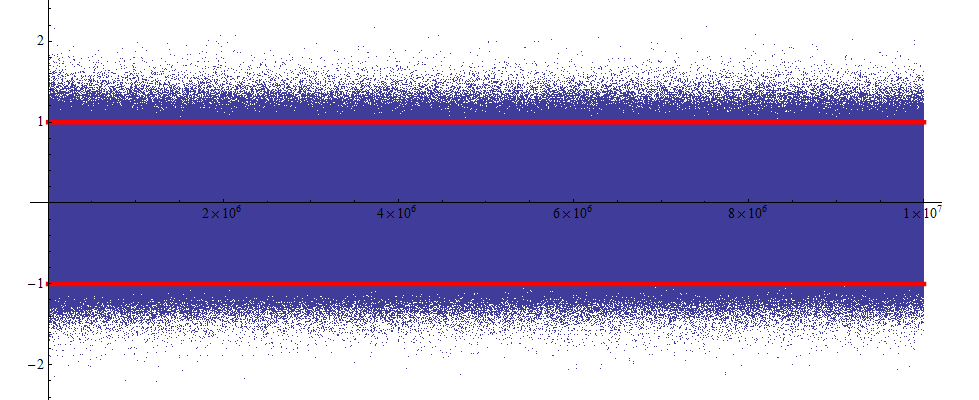}
    \caption{Oscillating behaviour of the expression $\sigma_3(2, n)$}
    \label{Oscilat1}
\end{figure}

However, the behaviour of the expression $\sigma_3(2, n)$ until $n=10^7$ does not show convergence to the segment $[-1, +1]$ (see Fig. \ref{Oscilat1}, obtained by Juris Čerņenoks). Although it is oscillating almost as required by the Law of the Iterated Logarithm, very many of its values lay outside the segment.

Could we hope to prove the above estimates? To our knowledge, the best result on this problem is due to C. L. Stewart \cite{stewart}. It follows from his Theorem 2 (put $\alpha=0$), that
\[S_q(p^n)>\frac{\log{n}}{\log{\log{n}} +C_0}-1,\]
where the constant $C_0>0$ can be effectively computed from $q, p$. Since then, no better than $\frac{\log{n}}{\log\log{n}}$ lower bounds of $S_q(p^n)$ have been  proved. 

However, it appears that from a well-known unproved hypothesis about integer complexity in basis $\{1,+,\cdot\}$, one can derive a strong \textbf{linear} lower bound of $S_3(2^n)$.

\begin{proposition}
\label{ds_pno}
For any  primes $p, q$, and all $n$, $S_q(p^n) \geq \f{p^n} - n q \log_q{p}$.
\end{proposition}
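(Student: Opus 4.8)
The plan is to read the inequality in its contrapositive-friendly form: since $S_q(p^n) \geq \f{p^n} - n q \log_q{p}$ is equivalent to $\f{p^n} \leq S_q(p^n) + n q \log_q{p}$, what is really being asserted is an \emph{upper} bound on the integer complexity of $p^n$. To obtain such a bound it suffices to exhibit one explicit expression for $p^n$ in basis $\{1,+,\cdot\}$ whose number of 1's is at most $S_q(p^n) + n q \log_q{p}$; the minimality built into $\f{\cdot}$ then does the rest. The natural candidate is the expression obtained by applying Horner's scheme to the canonical base-$q$ digits of $p^n$, and the whole argument rests on the standard subadditivity properties $\f{a+b} \le \f{a}+\f{b}$ and $\f{ab} \le \f{a}+\f{b}$ (concatenating optimal subexpressions with a $+$ or a $\cdot$).

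Concretely, I would write $p^n = \sum_{i=0}^{k_n-1} D_q(p^n,i)\, q^i$, where $k_n$ is the number of base-$q$ digits, and evaluate this by the recurrence $v \mapsto v\cdot q + D_q(p^n,i)$ running from the top digit down. Each step contributes one multiplication by $q$, costing at most $\f{q}$ ones, and one addition of a digit, costing at most $\f{D_q(p^n,i)}$ ones; digits equal to $0$ contribute only the multiplication and no addition. Summing the subadditive bounds along the Horner chain gives
\[ \f{p^n} \;\le\; \sum_{i=0}^{k_n-1} \f{D_q(p^n,i)} \;+\; (k_n-1)\,\f{q}. \]

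The three quantitative inputs are then elementary. First, $\f{q} \le q$, since $q$ is representable as a sum of $q$ ones; this is exactly where the factor $q$ in the target bound comes from. Second, for each digit $D_q(p^n,i) \le q-1$ we have $\f{D_q(p^n,i)} \le D_q(p^n,i)$ by the same sum-of-ones representation, so $\sum_i \f{D_q(p^n,i)} \le \sum_i D_q(p^n,i) = S_q(p^n)$. Third, the number of multiplications is $k_n - 1 = \lfloor \log_q(p^n)\rfloor = \lfloor n\log_q p\rfloor \le n\log_q p$. Substituting these into the displayed inequality yields $\f{p^n} \le S_q(p^n) + (k_n-1)\f{q} \le S_q(p^n) + n q \log_q{p}$, which is the claim.

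I do not expect a genuine obstacle here; the result follows from a single well-chosen construction. The only points demanding care are bookkeeping ones: correctly accounting for zero digits (which cost a multiplication but no addition) and the leading digit (which is added but not preceded by a multiplication), and verifying that the digit-count estimate $k_n - 1 \le n\log_q p$ is applied with the right floor. It is worth noting that primality of $p$ and $q$ plays no role in this direction, and that the crude estimate $\f{q}\le q$ is all that is needed; sharper bounds on $\f{q}$ would only improve the constant in front of the error term, not the structure of the argument.
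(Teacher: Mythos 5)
Your proof is correct and follows essentially the same route as the paper: build an explicit expression for $p^n$ via Horner's scheme on its base-$q$ digits, bound the cost by $S_q(p^n)$ plus $q$ ones per multiplication, and use $m \le n\log_q p$ for the number of multiplications. You simply spell out the bookkeeping that the paper leaves implicit in the phrase ``one can derive from it a representation of length $\le mq + S_q(p^n)$.''
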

\begin{proof}
Assume, $a_m a_{m-1}...a_0$ is a canonical base $q$ representation of the number $p^n$. One can derive from it a representation  of $p^n$ in basis $\{1,+,\cdot\}$, having length $\leq mq+S_q(p^n)$. Hence, $\f{p^n} \leq mq+S_q(p^n)$. Since $q^m \leq p^n < q^{m+1}$, we have $m \leq  n \log_q{p} < m+1$, and $\f{p^n} \leq n q \log_q{p}+S_q(p^n)$.
\end{proof}

\begin{theorem}
\label{log_pn}
If, for a prime $p \neq 3$, $\epsilon>0$, and $n>0$, $\fl{p^n} \geq 3+\epsilon$,
then $S_3(p^n) \geq n \epsilon \log_3{p}$.
\end{theorem}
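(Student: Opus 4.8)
The plan is to recognize that this statement is essentially an immediate corollary of Proposition \ref{ds_pno} combined with the definition of logarithmic complexity; there is no genuinely new content to establish, only a chaining of two inequalities after unfolding the definitions. So the work is almost entirely bookkeeping with the factor $\log_3 p$.

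First I would unfold the hypothesis. Since $\log_3(p^n) = n\log_3 p$, the definition $\fl{p^n} = \f{p^n}/\log_3(p^n)$ turns the assumption $\fl{p^n} \geq 3+\epsilon$ into the explicit lower bound on the ordinary complexity
\[
\f{p^n} \geq (3+\epsilon)\, n \log_3 p .
\]
Next I would invoke Proposition \ref{ds_pno} specialized to $q=3$. That proposition gives $S_q(p^n) \geq \f{p^n} - n q \log_q p$ for any primes $p,q$; substituting $q=3$ yields
\[
S_3(p^n) \geq \f{p^n} - 3 n \log_3 p .
\]
Note this is the only place the prime $p\neq 3$ assumption really matters as a hypothesis: the argument itself is insensitive to it (for $p=3$ the statement is vacuous, since then $\fl{3^n}=3$ and no $\epsilon>0$ can satisfy the premise), so I would simply carry $p$ through as a generic prime.

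Finally I would combine the two displayed inequalities. Substituting the lower bound for $\f{p^n}$ into the proposition's estimate gives
\[
S_3(p^n) \geq (3+\epsilon)\, n \log_3 p - 3 n \log_3 p = \epsilon\, n \log_3 p ,
\]
which is exactly the claim. The only step requiring any care is the elementary identity $\log_3(p^n)=n\log_3 p$ and ensuring the $3 n\log_3 p$ term produced by the proposition (from $q\log_q p$ at $q=3$) matches the ``$3$'' in the threshold $3+\epsilon$; this cancellation is precisely what makes the $\epsilon$-gap in logarithmic complexity translate into a \emph{linear}-in-$n$ lower bound on the digit sum. There is no real obstacle here — the substantive inequality was already done in Proposition \ref{ds_pno}, and this theorem is just its reformulation in terms of the spectrum gap $\epsilon$, the point being to make visible that a lower bound on $\fl{p^n}$ strictly above $3$ would force the strong linear digit-sum growth that Stewart's theorem cannot currently deliver.
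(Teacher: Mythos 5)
Your proposal is correct and follows exactly the paper's own argument: unfold $\fl{p^n}\geq 3+\epsilon$ into $\f{p^n}\geq(3+\epsilon)n\log_3 p$ via $\log_3(p^n)=n\log_3 p$, then apply Proposition \ref{ds_pno} with $q=3$ and cancel the $3n\log_3 p$ terms. No differences worth noting.
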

\begin{proof}
Since
\[3+\epsilon \leq \fl{p^n}=\frac{\f{p^n}}{\log_3{p^n}},\]
according to Proposition \ref{ds_pno}, we have
\[S_3(p^n) \geq (3+\epsilon)n \log_3{p} - 3n \log_3{p} = n \epsilon \log_3{p}.\]
\end{proof}

Let us remind the well-known (and verified as true until $n=39$) \cite{paper1}
\begin{hypo}
\label{powtwohypo}
For all $n\geq 1$, $\f{2^n}=2n$ (moreover, the product of $1+1$'s is shorter than any other representation of $2^n$).
\end{hypo}

We consider proving or disproving of Hypothesis \ref{powtwohypo} as one of the biggest challenges of number theory.

If $\f{2^n}=2n$, then $\fl{2^n}=\frac{2}{\log_3{2}}$, and thus, by taking  in Theorem \ref{log_pn}, $\epsilon=\frac{2}{\log_3{2}}-3$, we obtain
\begin{cor}
\label{powtwocor}
If Hypothesis \ref{powtwohypo} is true, then for all $n>0$, $S_3(2^n) > 0.107\cdot n$. 
\end{cor}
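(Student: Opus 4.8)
The plan is to specialize Theorem~\ref{log_pn} to the pair $p=2$, $q=3$ (legitimate since $2 \neq 3$), feeding it the value of the logarithmic complexity that Hypothesis~\ref{powtwohypo} supplies. First I would compute $\fl{2^n}$ under the hypothesis: since $\f{2^n}=2n$ and $\log_3 2^n = n \log_3 2$, we obtain $\fl{2^n} = \frac{2n}{n\log_3 2} = \frac{2}{\log_3 2}$, a constant independent of $n$. Using $\log_3 2 \approx 0.6309$ this constant is approximately $3.170$, comfortably above $3$, which is exactly the regime in which Theorem~\ref{log_pn} delivers a linear bound.

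Next I would isolate the gap above $3$ by setting $\epsilon = \frac{2}{\log_3 2} - 3 \approx 0.170 > 0$. Because the identity $\fl{2^n} = 3 + \epsilon$ holds for \emph{every} $n>0$ (not just asymptotically), the hypothesis of Theorem~\ref{log_pn} is satisfied uniformly, and the theorem applies verbatim to give $S_3(2^n) \geq n\epsilon \log_3 2 = n\left(\frac{2}{\log_3 2} - 3\right)\log_3 2 = n\left(2 - 3\log_3 2\right)$.

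The only remaining step is the numerical check that this linear coefficient clears the stated threshold, and this is also where the strict inequality originates: one has $2 - 3\log_3 2 \approx 0.10721$, so $2 - 3\log_3 2 > 0.107$ strictly, whence $S_3(2^n) > 0.107 \cdot n$ for all $n>0$. There is no genuine obstacle in this corollary — it is a direct substitution into the already-established Proposition~\ref{ds_pno} and Theorem~\ref{log_pn} — so the ``hard part'' is merely confirming that the constant $2 - 3\log_3 2$ sits strictly above the rounded value $0.107$ rather than exactly at it; evaluating $\log_3 2 = \frac{\ln 2}{\ln 3}$ to four decimal places settles this at once.
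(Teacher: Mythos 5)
Your proposal is correct and follows exactly the paper's route: under Hypothesis \ref{powtwohypo} one has $\fl{2^n}=\frac{2}{\log_3 2}$, so Theorem \ref{log_pn} with $\epsilon=\frac{2}{\log_3 2}-3$ gives $S_3(2^n)\geq n(2-3\log_3 2)\approx 0.1072\,n>0.107\,n$. Your added remark that the strict inequality comes from $2-3\log_3 2$ lying strictly above the rounded constant $0.107$ is a correct and harmless refinement of what the paper leaves implicit.
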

Thus, proving of Hypothesis \ref{powtwohypo} would yield a strong linear lower bound for $S_3(2^n)$. Should this mean that proving of  Hypothesis \ref{powtwohypo} is an extremely complicated task?

Similar considerations appear in \cite{altman} (see the discussion following Conjecture 1.3) and \cite{altman3} (see Section 2.1.2).

\pdfbookmark[2]{2.2. Compression of powers}{compr}
\subsection{Compression of powers}
\label{subsec:compr}

For a prime $p$, can the shortest expressions of powers $p^n$ be obtained simply by multiplying the best expressions of $p$?

The answer ``yes'' can be proved easily for all powers of $p=3$. For example, the shortest expression of $3^3=27$ is $(1+1+1)\cdot(1+1+1)\cdot(1+1+1)$.
Thus, for all $n$, $\f{3^n}=n\f{3}=3n$.
The same seems to be true for the powers of  $p=2$, see the above Hypothesis \ref{powtwohypo}. For example, the shortest expression of $2^5=32$ is
$(1+1)\cdot(1+1)\cdot(1+1)\cdot(1+1)\cdot(1+1)$.
Thus, it seems, for all $n$, $\f{2^n}=n\f{2}=2n$.

However, for $p=5$ this is true only for $n=1, 2, 3 , 4, 5$, but the shortest expression of $5^6$  is not $5\cdot5\cdot5\cdot5\cdot5\cdot5$, but
\[5^6=15625=1+2^3 \cdot 3^2 \cdot 217=1+2^3 \cdot 3^2(1+2^3 \cdot 3^3).\]
Thus, we have here a kind of ``compression'': $\f{5^6}=29 < 6\f{5} = 30$.

Could we expect now that the shortest expression of $5^n$ can be obtained by multiplying the expressions of $5^1$ and $5^6$? This is true at least until $n=17$, as one can verify by using the online calculator \cite{calc} by Jānis Iraids. But, as observed by Juris Čerņenoks, $\f{5^{36}}$ is not $\f{5^6}\cdot 6 = 29\cdot 6 = 174$ as one might expect, but:
\[5^{36} = 2^4 \cdot 3^3 \cdot 247 \cdot 244125001 \cdot 558633785731 + 1,\]
where
\[247 = 3 \cdot (3^4 + 1) + 1;\]
\[244125001 = 2^3 \cdot 3^2 \cdot (2^3 \cdot 3^3 + 1) \cdot (2^3 \cdot 3^2 \cdot (2^3 \cdot 3^3 + 1) + 1) + 1;\]
\[558633785731 = 2 \cdot 3 \cdot (2^3 \cdot 3^5 + 1)\cdot(2 \cdot 3^4 \cdot (2^6 \cdot 3^5 \cdot (2 \cdot 3^2 + 1) + 1) + 1) + 1.\]
In total, this expression of $5^{36}$ contains $173$ ones.

Until now, no more ``compression points'' are known for powers of $5$.

Let us define the corresponding general notion:

\begin{definition}
Let us say that $n$ is a \textbf{compression point} for powers of the prime $p$, if and only if for any numbers $k_i$ such that $0<k_i<n$ and $\sum{k_i}=n$: \[\f{p^n}<\sum\f{{p^{k_i}}},\] 
i.e., if the shortest expression of $p^n$ is better than any product of expressions of smaller powers of $p$.
\end{definition}

\begin{que}
Which primes possess an infinite number of compression points, which ones - a finite number, and which ones do not possess them at all? 
\end{que}

Powers of 3 (and, it seems, powers of 2 as well) do not possess compression points at all. Powers of 5 possess at least two compression points. More about compression of powers of particular primes - see our previous paper \cite{paper1} (where compression is termed ``collapse'').

\begin{proposition}
If a prime $p\neq 3$ possess zero or finite number of compression points, then there is an $\epsilon>0$ such that for all $n>0$, $\fl{p^n} \geq 3+\epsilon$.
\end{proposition}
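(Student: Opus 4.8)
The plan is to reduce $\f{p^n}$ \emph{exactly} to a sum of complexities of powers of $p$ whose exponents are compression points, and then to bound each such summand strictly below by $(3+\epsilon)$ times its logarithmic size, using crucially that $p\neq 3$.

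First I would extract the decomposition principle hidden in the definition of a compression point. By definition, $n$ fails to be a compression point precisely when some partition $n=\sum k_i$ with $0<k_i<n$ satisfies $\f{p^n}\geq\sum\f{p^{k_i}}$; since multiplying the shortest expressions of the $p^{k_i}$ always yields $\f{p^n}\leq\sum\f{p^{k_i}}$, equality must in fact hold. I would then iterate: starting from $n$, split every exponent that is not a compression point and recurse on the parts. Because each part is strictly smaller and $n=1$ is (vacuously) a compression point, the process terminates, and adding the equalities along the way gives
\[
\f{p^n}=\sum_j \f{p^{m_j}},\qquad \sum_j m_j=n,
\]
where every $m_j$ lies in the finite set $C$ of compression points of $p$.

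Next I would establish the strict per-block bound $\fl{p^m}>3$ for each $m\in C$. The universal lower bound $\f{N}\geq 3\log_3 N$ gives $\f{p^m}\geq 3m\log_3 p$. Since $p\neq 3$ is prime, $\log_3 p$ is irrational (a relation $3^a=p^b$ with $a,b\geq 1$ is impossible by unique factorization), so $3m\log_3 p$ is irrational for $m\geq 1$ and cannot equal the integer $\f{p^m}$; hence the inequality is strict and $\fl{p^m}>3$. Because $C$ is finite, I may set
\[
\epsilon=\min_{m\in C}\bigl(\fl{p^m}-3\bigr)>0,
\]
so that $\f{p^{m}}\geq(3+\epsilon)\,m\log_3 p$ for every $m\in C$.

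Finally I would combine the two ingredients. Summing the block bound over the decomposition gives
\[
\f{p^n}=\sum_j \f{p^{m_j}}\geq(3+\epsilon)\log_3 p\sum_j m_j=(3+\epsilon)\,n\log_3 p,
\]
whence $\fl{p^n}=\f{p^n}/(n\log_3 p)\geq 3+\epsilon$ for all $n>0$, as required. The only delicate point is the bookkeeping of the recursion: one must check that it genuinely terminates with all leaves in $C$ and that equality is preserved at every splitting step. Once that is in place, the irrationality of $\log_3 p$ supplies the strict pointwise bounds essentially for free, and the \emph{finiteness} of $C$ is exactly what converts those pointwise inequalities $\fl{p^m}>3$ into a single uniform $\epsilon$.
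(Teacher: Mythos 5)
Your proposal is correct and follows essentially the same route as the paper: decompose $\f{p^n}$ as an exact sum $\sum_j\f{p^{m_j}}$ over a finite set of exponents via repeated splitting at non-compression points, then use finiteness of that set to extract a uniform $\epsilon$ (the paper stops the recursion once all exponents are at most the last compression point and applies the mediant inequality $\frac{\sum a_i}{\sum b_i}\geq\min\frac{a_i}{b_i}$, which is the same bookkeeping as your per-block bound). Your irrationality-of-$\log_3 p$ justification for the strict inequality $\fl{p^m}>3$ is a detail the paper simply asserts, and is a welcome addition.
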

\begin{proof}
If $p\neq 3$, then for any particular $n$, $\fl{p^n} >3$.

If $n$ is not a compression point, then 
\[\f{p^n}=\sum\f{{p^{k_i}}}\]
for some numbers $k_i$ such that $0<k_i<n$ and $\sum{k_i}=n$. Now, if some of $k_i$-s is not a compression point as well, then we can express $\f{{p^{k_i}}}$ as $\sum\f{{p^{l_j}}}$, where $0<l_j<k_i$ and $\sum l_j=k_i$.

In this way, if m is the last compression point of $p$, then, for any $n>m$, we can obtain numbers $k_i$ such that $0<k_i\leq m$, $\sum{k_i}=n$, and 
\[\f{p^n}=\sum\f{{p^{k_i}}}.\]
Hence,
\[\fl{p^n}=\frac{\f{p^n}}{\log_3 p^n}=\frac{\sum\f{{p^{k_i}}}}{(\log_3 p)\sum k_i}.\]
Since, for any $a_i, b_i>0$,
\[\frac{\sum a_i}{\sum b_i}\geq \min \frac{a_i}{b_i},\]
we obtain that
\[\fl{p^n}\geq \min\frac{\f{p^{k_i}}}{k_i\log_3 p}=\min\fl{p^{k_i}}=3+\epsilon,\]
for some $\epsilon>0$.
\end{proof}

As we established in  Section \ref{subsec:conn}, for any particular prime $p \neq 3$, proving of $\fl{p^n} \geq 3+\epsilon$ for some $\epsilon>0$, and all sufficiently large $n>0$,  would yield a strong  linear lower bound for $S_3(p^n)$.  Therefore, for reasons explained in Section \ref{subsec:conn}, proving of the above inequality (even for a particular $p \neq 3$) seems to be an extremely complicated task. And hence, proving (even for a particular $p \neq 3$) that $p$ possess zero or finite number of compression points seems to be an extremely complicated task as well.

\begin{proposition}
\label{power_lim}
For any number $k$, $\lim \limits_{n \to \infty} \fl{k^n}$ exists, and does not exceed any particular $\fl{k^n}$.
\end{proposition}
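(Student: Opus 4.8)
The plan is to recognize $\f{k^n}$ as a subadditive sequence in $n$ and then invoke Fekete's subadditive lemma. First I would rewrite $\fl{k^n} = \frac{\f{k^n}}{\log_3 k^n} = \frac{\f{k^n}}{n \log_3 k}$, assuming $k \geq 2$ so that $\log_3 k > 0$ and the expression is well-defined (the case $k=1$ must be excluded, since then $\fl{k^n}$ involves division by zero). Because $\log_3 k$ is a positive constant independent of $n$, the statement is equivalent to showing that $\lim_{n \to \infty} \frac{\f{k^n}}{n}$ exists and equals $\inf_{n \geq 1} \frac{\f{k^n}}{n}$; dividing through by $\log_3 k$ then transfers both conclusions to $\fl{k^n}$, yielding existence of the limit and the fact that it is the infimum of the $\fl{k^n}$, hence does not exceed any particular value.

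The key structural input is subadditivity: for all $m, n \geq 1$ we have $\f{k^{m+n}} \leq \f{k^m} + \f{k^n}$. This holds because, given a shortest expression for $k^m$ and a shortest expression for $k^n$, their product is an expression for $k^{m+n}$ using exactly $\f{k^m} + \f{k^n}$ ones, and the minimal expression for $k^{m+n}$ can only be shorter. Writing $a_n := \f{k^n}$, this says precisely that the sequence $(a_n)_{n \geq 1}$ is subadditive, and each $a_n \geq 1 > 0$ is nonnegative.

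I would then apply Fekete's lemma. If a self-contained argument is preferred, it runs as follows: let $L := \inf_{n \geq 1} \frac{a_n}{n}$, which is finite and nonnegative since each $a_n \geq 0$. Fix $m$, and for large $n$ write $n = qm + r$ with $1 \leq r \leq m$ (hence $q \geq 1$). Repeated application of subadditivity gives $a_n \leq q\, a_m + a_r$, so $\frac{a_n}{n} \leq \frac{q\, a_m}{qm + r} + \frac{a_r}{n}$. As $n \to \infty$ the first term tends to $\frac{a_m}{m}$, while the second tends to $0$ since $a_r$ ranges over the finite bounded set $\{a_1, \dots, a_m\}$. Thus $\limsup_{n \to \infty} \frac{a_n}{n} \leq \frac{a_m}{m}$, and taking the infimum over $m$ gives $\limsup_{n \to \infty} \frac{a_n}{n} \leq L$; the reverse bound $\liminf_{n \to \infty} \frac{a_n}{n} \geq L$ is immediate from the definition of $L$. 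Hence $\lim_{n \to \infty} \frac{a_n}{n} = L = \inf_{n \geq 1} \frac{a_n}{n}$, and dividing by $\log_3 k$ completes the proof.

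There is no serious obstacle here: the entire content is the subadditivity inequality, which is essentially immediate from the multiplicative definition of $\f{\cdot}$, combined with the standard Fekete argument. The only points requiring care are the exclusion of the degenerate case $k = 1$ and the convergence estimate in the Fekete step, where one must verify that the remainder term $\frac{a_r}{n}$ vanishes uniformly over the bounded range $1 \leq r \leq m$.
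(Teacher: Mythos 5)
Your proposal is correct and is essentially the paper's own argument: the paper also writes $n = mn_0 + r$, bounds $\f{k^n} \leq m\f{k^{n_0}} + \f{k^r}$ by multiplying expressions, and deduces $\limsup_{n\to\infty}\fl{k^n} \leq \fl{k^{n_0}}$ for every $n_0$ before closing the gap with the $\liminf$ — i.e., it is Fekete's subadditive lemma spelled out. Your explicit exclusion of $k=1$ is a minor point of care the paper omits, but the substance of the two proofs is the same.
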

\begin{proof}
Consider a number $n$ expressed as $n=mn_0+r$ where $m, n_0, r \in \mathbb{N}$.
\[\fl{k^n} \leq \frac{m \f{k^{n_0}}+\f{k^r}}{(mn_0+r)\log_3 k},\]
hence, for all $r$
\[\lim \sup \limits_{m \to \infty} \fl{k^{mn_0+r}} \leq \fl{k^{n_0}},\]
and consequently, for all $n_0$
\[\lim \sup \limits_{n \to \infty} \fl{k^n} \leq \fl{k^{n_0}}.\]
On the other hand, consider a subsequence of numbers $n_i$ such that
\[\lim \limits_{i \to \infty} \fl{k^{n_i}} = \lim \inf \limits_{n \to \infty} \fl{k^n}.\]
Since $\lim \sup \limits_{n \to \infty} \fl{k^n}$ does not exceed any of $\fl{k^{n_i}}$, we obtain that
\[\lim \sup \limits_{n \to \infty} \fl{k^n} = \lim \inf \limits_{n \to \infty} \fl{k^n}.\] 
\end{proof}
  
More about the spectrum of logarithmic complexity $\fl{n}$ see in our previous paper \cite{paper1}.

The \textbf{weakest possible hypothesis} about the spectrum of logarithmic complexities would be
\begin{hypo}
\label{weakhypo}
There is an $\epsilon>0$ such that for infinitely many numbers $n$: $\fl{n} \geq 3+\epsilon$.
\end{hypo}
Hypothesis \ref{weakhypo} should be easier to prove than Hypothesis \ref{powtwohypo} and other hypotheses from \cite{paper1}, but it remains still unproved nevertheless.

On the other hand,
\begin{que}
If, for all primes p, $\lim \limits_{n \to \infty} \fl{p^n}=3$, could this imply that, contrary to Hypothesis \ref{weakhypo}, $\lim \limits_{N \to \infty} \fl{N}=3$?
\end{que}

\pdfbookmark[1]{3. Integer complexity in basis $1,+, ., -$}{minus}
\section{Integer complexity in basis $\{1,+, \cdot, -\}$}
\label{minus}

In this Section, we consider representing of natural numbers by arithmetical expressions using 1's, addition,  multiplication, subtraction, and parentheses. According to Definition \ref{compl_minus}, $\fm{n}$ denotes the number of 1's in the shortest expressions representing $n$ in basis $\{1, +, \cdot, -\}$.

Of course, for all $n$, $\fm{n}\leq\f{n}$. The number $23$ is the first one, which possesses a better representation in basis $\{1, +, \cdot, -\}$ than in basis $\{1, +, \cdot\}$:   
\[23=2^3\cdot 3 -1 =2^2\cdot 5+2; \fm{23}=10; \f{23}=11.\]

\begin{definition}
\label{e}
\begin{enumerate}
\item[a)] Let's denote by $E_{-}(n)$ the  \textbf{largest} $m$ such that $\fm{m}=n$.
\item[b)] Let's denote by $E_{-k}(n)$ the \textbf{ $k$-th largest} $m$ such that $\fm{m}\leq n$ (if it exists). Thus, $E_{-}(n)=E_{-1}(n)$.
\item[c)] Let's denote by $e_{-}(n)$ the  \textbf{smallest} $m$ such that $\fm{m}=n$.
\end{enumerate}
\end{definition}

One can verify easily that $E_{-}(n)=E(n)$ for all $n>0$, i.e., that the formulas discovered by J. L. Selfridge for $E(n)$ remain valid for $E_{-}(n)$ as well:
\begin{proposition}
\label{e1minus}
For all $k\geq 0$:
\[E_{-}(3k+2) = 2\cdot 3^k;\]
\[ E_{-}(3k+3) = 3\cdot 3^k;\]
\[ E_{-}(3k+4) = 4\cdot 3^k.\]
\end{proposition}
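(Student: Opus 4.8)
The plan is to prove the stronger statement asserted in the text, namely $E_{-}(n)=E(n)$ for all $n>0$, and then read off the three cases from Selfridge's already-known formulas for $E(n)$. Everything therefore reduces to showing that allowing subtraction does not enlarge the biggest number reachable with a given supply of $1$'s.

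First I would pass from ``largest $m$ with $\fm{m}=n$'' to ``largest value obtainable from an expression using at most $n$ ones''. Write $M^{+}(n)$ for this quantity. A one-line monotonicity argument — from any expression one gets a strictly larger value by adding a single $1$, so $M^{+}(n)\geq M^{+}(n-1)+1$ — shows that $M^{+}$ is strictly increasing; hence the largest number of complexity $\leq n$ in fact has complexity exactly $n$, giving $E_{-}(n)=M^{+}(n)$. The easy inequality $M^{+}(n)\geq E(n)$ is immediate from $\fm{m}\leq\f{m}$, so the whole content lies in the reverse bound $M^{+}(n)\leq E(n)$.

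The crux is that subtraction can produce negative intermediate values, and a product of two negatives is positive, so one cannot bound $M^{+}$ by induction alone; I would run a simultaneous induction that also controls the most negative value $M^{-}(n)$ reachable with $\leq n$ ones. The top operation of an optimal expression is $+$, $\cdot$ or $-$, and the two subexpressions independently realise $M^{+}$ or $M^{-}$ on their share of the ones, so both quantities satisfy a recursion over splits $n=n_{1}+n_{2}$ whose candidate terms are $M^{+}(n_1)+M^{+}(n_2)$, the product $M^{+}(n_1)M^{+}(n_2)$, the product of two negatives $M^{-}(n_1)M^{-}(n_2)$, and the ``positive minus negative'' term $M^{+}(n_1)-M^{-}(n_2)$. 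The key lemma to prove along the way is that negatives are exactly ``one $1$ less efficient'', i.e. $M^{-}(n)=1-E(n-1)$, so that $|M^{-}(n)|=E(n-1)-1<E(n-1)$; the cheapest way to flip a sign, $1-(\cdot)$, costs one extra $1$.

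Granting this, the two dangerous terms collapse: using submultiplicativity $E(a)E(b)\leq E(a+b)$ one gets $M^{-}(n_1)M^{-}(n_2)<E(n_1-1)E(n_2-1)\leq E(n-2)<E(n)$, while the subtraction term satisfies $M^{+}(n_1)-M^{-}(n_2)=E(n_1)+E(n_2-1)-1\leq E(n_1)+E(n_2)$, which is already attained by a pure addition. Hence both genuinely new options are dominated by the addition and multiplication terms, the recursion for $M^{+}$ reduces to the classical $E(n)=\max_{n_1+n_2=n}\{E(n_1)+E(n_2),\,E(n_1)E(n_2)\}$, and $M^{+}(n)=E(n)$ follows by induction. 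I expect the main obstacle to be precisely the bookkeeping for $M^{-}$: establishing $M^{-}(n)=1-E(n-1)$ (checking that neither a product involving a negative factor nor a deeper subtraction beats $1-E(n-1)$) and clearing the small-$n$ base cases by hand, after which the domination estimates and the final appeal to Selfridge are routine.
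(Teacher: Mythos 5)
Your proposal is correct, and it is worth noting that the paper itself offers no proof of this proposition at all: it merely remarks that ``one can verify easily that $E_{-}(n)=E(n)$'' and then quotes Selfridge's formulas. So the entire content here is supplied by you, and you have identified it correctly: the only way subtraction could enlarge the largest reachable number is through negative intermediate values (a product of two negatives, or $A-B$ with $B$ very negative), and this is ruled out by tracking the most negative reachable value $M^{-}(n)$ alongside $M^{+}(n)$, together with the monotonicity argument that converts ``largest $m$ with $\fm{m}\leq n$'' into ``largest $m$ with $\fm{m}=n$''. Two points deserve care when you write it out. First, the lemma $M^{-}(n)=1-E(n-1)$ needs its own corner-case analysis of the same shape as the one for $M^{+}$: the most negative value could a priori arise not only as $1-X$ but also as $A-B$ with $A$ already negative, or as a product of a negative with a positive; both are dominated using $E(a)+E(b)\leq E(a+b)$ and $E(a)E(b)\leq E(a+b)$ (the latter is \emph{super}multiplicativity of $E$, not submultiplicativity as you call it, though the inequality you wrote is the correct one). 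Second, for $n\leq 2$ no negative value is reachable at all, so $M^{-}$ and the terms involving it must be handled separately there (they are then trivially dominated). With those details filled in, the simultaneous induction closes exactly as you describe, and the three displayed formulas follow from Selfridge's $E(3k+2)=2\cdot 3^{k}$, $E(3k+3)=3^{k+1}$, $E(3k+4)=4\cdot 3^{k}$.
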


One can verify also that for $n\geq 5, E_{-2}(n)=E_2(n)$, hence, the formula obtained by D. A. Rawsthorne \cite{raw} remains true for the basis $\{1, +, \cdot, -\}$: for all $n\geq 8$, $E_{-2}(n) = \frac{8}{9}E_{-}(n)$.

These formulas allow for building of feasible ``sieve'' algorithms for computing of $\fm{n}$. Indeed, after filtering out all $n$ with $\fm{n}<k$, one can filter out all $n$ with $\fm{n}=k$ knowing that $n\leq E_{-}(k)$, and trying out representations of $n$ as $A \cdot B, A+B, A-B$ for $A, B$ with $\fm{A}, \fm{B}<k$. See \cite{oeisA091333} for a more sophisticated efficient computer program designed by Jānis Iraids.

Juris Čerņenoks used another efficient program to compute $\fm{n}$ until $n=2 \cdot 10^{11}$. The program was written in Pascal, parallel processes were not used. With 64G RAM and additional 128G of virtual RAM (on SSD), the computation took 10 hours.

The values of $e_{-}(n)$ up to $n=81$ are represented in Table \ref{table:e1}. 

Some observations about $e_{-}(n)$ are represented in Table \ref{table:e} and Fig. \ref{e_n_minus2}. One might notice that the properties of the numbers around $e_{-}(n)$ are different from (and less striking than) the properties of the numbers around $e(n)$  \cite{paper1}.

Does Fig. \ref{e_n_minus2} provide some evidence that the logarithmic complexity of $n$ does not tend to $3$?

\begin{figure}
    \centering
    \includegraphics[width=0.8\textwidth]{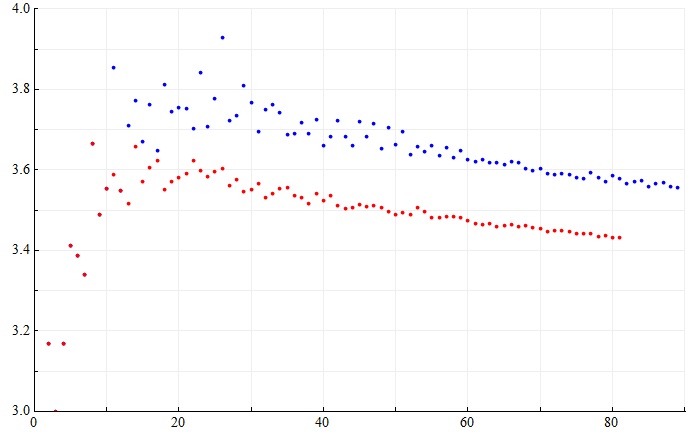}
    \caption{Logarithmic complexities of the numbers $e(n)$ (upper dots) and $e_{-}(n)$}
    \label{e_n_minus2}
\end{figure}

At least for all $2^n$ up to $2\cdot 10^{11}$ Hypothesis \ref{powtwohypo} remains true also for the basis $\{1, +, \cdot, -\}$.

While observing the shortest expressions representing small numbers in basis $\{1, +, \cdot, -\}$, one might conclude that whenever subtraction is the \textbf{last} operation of a shortest expression, then it is subtraction of $1$, for example, $23=2^3\cdot 3 -1$.

As established by Juris Čerņenoks, the first number, for which this observation \textbf{fails}, is larger than $55$ billions:

\[\fm{n}=75; n=55659409816 =(2^4 \cdot 3^3-1)(3^{17}-1)-2\cdot 3.\]

Until $2 \cdot 10^{11}$, there are only $3$ numbers, for which subtraction of 6 is necessary as the last operation of shortest expressions - the above one and the following two:
\[\fm{n}=77; n=111534056696 =(2^5\cdot 3^4-1)(3^{16}+1)-2\cdot 3,\]
\[\fm{n}=78; n=167494790108 =(2^4 \cdot 3^4+1)(3^{17}-1)-2 \cdot 3.\]

Necessity for subtraction of 8, 9, 12, or larger was not observed for numbers until  $2 \cdot 10^{11}$.
 
\begin{theorem}
\label{minus_main}
For all $n>1$, 
\[3\log_3 n \leq \fm{n} \leq 6\log_6 n + 5.890 < 3.679\log_3 n + 5.890,\]
If $n$ is a power of 3, then $\fm{n}=3\log_3 n$, else $\fm{n} > 3\log_3 n$.
\end{theorem}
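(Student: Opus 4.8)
The plan is to treat the three parts of the statement separately: the lower bound together with its equality clause, the constructive upper bound, and the purely numerical comparison at the end. For the lower bound I would first note that subtraction can never \emph{increase} a value, since $a-b<a$. Consequently the largest number representable by an expression with exactly $k$ ones is the same in basis $\{1,+,\cdot,-\}$ as in basis $\{1,+,\cdot\}$: splitting a shortest expression at its topmost operation, the cases $+$ and $\cdot$ are handled by the usual induction, while a topmost $-$ yields a value strictly below that of its minuend and so is dominated. The classical estimate then gives that this maximum is at most $3^{k/3}$. Taking $k=\fm{n}$ we obtain $n\le 3^{\fm{n}/3}$, i.e. $\fm{n}\ge 3\log_3 n$. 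For the equality clause, $\fm{n}=3\log_3 n$ forces $n=3^{\fm{n}/3}$; since $n$ and $\fm{n}$ are positive integers and $3^{x}$ is an integer only when $x\in\mathbb{Z}$, the exponent $\fm{n}/3$ must be an integer and $n$ is a power of $3$. Conversely $\fm{3^m}\le 3m$ (the product of $m$ threes) combined with the lower bound gives $\fm{3^m}=3m=3\log_3 3^m$, so equality holds exactly for powers of $3$ and is strict otherwise.

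The upper bound is the heart of the argument, and here I would represent $n$ in base $6$ and evaluate it by Horner's scheme, implementing each step ``multiply the accumulator by $6$ and add the current digit'' through the factorisation $6=2\cdot 3$. The crucial point is that, after reducing the digits to the balanced set $\{-2,-1,0,1,2,3\}$ — replacing a digit $4$ by $-2$ and a digit $5$ by $-1$, each time carrying $1$ into the next position — every step $x\mapsto 6x+d$ can be built with at most $6$ ones: $d=0$ costs $5$ (the constants $2$ and $3$), while the remaining digits are realised as $6x\pm 1$, as $2(3x\pm 1)$, or as $3(2x+1)$, each using exactly $6$ ones. Chaining these $\approx\log_6 n$ steps and adding the complexity of the leading digit therefore yields $\fm{n}\le 6\log_6 n+\mathrm{const}$.

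The additive constant $5.890$ then comes from a careful bookkeeping of the boundary terms: the complexity of the top digit, the possible lengthening of the representation by one position caused by a carry out of the most significant digit, and a direct check of the finitely many small $n$ for which the asymptotic count would otherwise undershoot. Finally the rightmost inequality is immediate, since $6\log_6 n=(6\log_6 3)\log_3 n$ with $6\log_6 3=\tfrac{6\ln 3}{\ln 6}=3.6788\ldots<3.679$, so that $6\log_6 n<3.679\log_3 n$ for every $n>1$ (where $\log_3 n>0$), and adding $5.890$ to both sides preserves the inequality.

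I expect the main obstacle to be the upper bound rather than the lower bound. The conceptual crux is the observation that factoring the multiply-by-$6$ step lets \emph{every} balanced base-$6$ digit be absorbed for a total of at most $6$ ones; once this is in place the remaining work is purely technical, namely controlling the carries and pinning down the explicit constant $5.890$ (rather than an unspecified $O(1)$), which will require the finite verification mentioned above.
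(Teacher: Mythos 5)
Your proposal is correct and follows essentially the same route as the paper: the lower bound via the fact that subtraction never helps the largest‑number function (so $n\le E_{-}(\fm{n})=E(\fm{n})\le 3^{\fm{n}/3}$, with equality exactly at powers of $3$), and the upper bound via the mod‑$6$ reduction in which every step costs at most $6$ ones — your balanced base‑$6$ Horner scheme with digits $\{-2,-1,0,1,2,3\}$ is precisely the paper's case analysis $n=6k+r$, including the carries for $r=4,5$. The only part you leave as "bookkeeping" is the explicit constant, which the paper obtains by noting that $m$ steps reach a number below $n/6^m+2/5$, so $m=\lceil\log_6(5n/23)\rceil$ steps suffice to reach a number at most $5$, giving $\fm{n}\le 6(\log_6(5n/23)+1)+5=6\log_6 n+5.890$.
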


\begin{proof}
The lower bound follows from Proposition \ref{e1minus}. Let us prove the upper bound.

If $n=6k$, then we can start building the expression for $n$ as $(1+1)(1+1+1)k$. Hence, by spending $5$ ones, we reduce the problem to building the expression for the number $k \leq \frac{n}{6}$. 

Similarly, if $n=6k+1$, then, by spending $6$ ones, we reduce the problem to building the expression for the number $k \leq \frac{n-1}{6}$. 

If $n=6k+2=2(3k+1)$, then, by spending $6$ ones, we reduce the problem to building the expression for the number $k \leq \frac{n-2}{6}$. 

If $n=6k+3=3(2k+1)$, then, by spending $6$ ones, we reduce the problem to building the expression for the number $k \leq \frac{n-3}{6}$. 

If $n=6k+4=2(3k+2)=2(3(k+1)-1)$, then, by spending $6$ ones, we reduce the problem to building the expression for the number $k+1 \leq \frac{n+2}{6}=\frac{n}{6}+\frac{1}{3}$. 

Finally, if $n=6k+5=6(k+1)-1$, then, by spending $6$ ones, we reduce the problem to building the expression for the number $k+1 \leq \frac{n+1}{6}=\frac{n}{6}+\frac{1}{6}$. 

Thus, by spending no more than $6$ ones, we can reduce building the expression for any number $n$ to building the expression for some number $k \leq \frac{n}{6}+\frac{1}{3}$. By applying this kind of operations $2$ times to the number $n$, we will arrive at a number $k \leq \frac{n}{6^2}+\frac{1}{6 \cdot 3}+\frac{1}{3}$. By applying them $m$ times, we will arrive at a number
\[k < \frac{n}{6^m}+\frac{1}{3}\cdot\frac{1}{1-\frac{1}{6}}=\frac{n}{6^m}+\frac{2}{5}.\]
Hence, if $\frac{n}{6^m}+\frac{2}{5} \leq 5$, or, $6^m \geq \frac{5n}{23}$, or $m \geq \log_6 \frac{5n}{23}$ , then, after $m$ operations, spending $\leq 6m$ ones, we will arrive at the number $\leq 5$. Thus,
\[\fm{n} \leq 6\left(\log_6 \frac{5n}{23} +1\right)+5 = 6\log_6 n +5.890< 3.679\log_3 n + 5.890.\]
\end{proof}

According to Theorem \ref{minus_main}, for all $n>1$:
\[3 \leq \fml{n} \leq 3.679+\frac{5.890}{\log_3 n}.\]

It seems, the largest values of $\fml{n}$ are taken by single numbers, see Table \ref{logc}. The lists in braces represent Cunningham chains of primes \cite{chains}.

\begin{center}
\begin{longtable}{|r|c|c|c|l|}
\caption{Largest values of $\fml{n}$}
\label{logc} \\
\hline
$n$ & $\fm{n}$ & $\approx \fml{n}$ & $\f{n}$ &Other properties \\
\hline
\endfirsthead
$11$ & $8$ & $3.665$ & $8$ & $e_{-}(8), \{2, 5, 11, 23, 47\}$ \\ 
$67$ & $14$ & $3.658$ & $14$ & $e_{-}(14)$, prime \\ 
$787$ & $22$ & $ 3.625$ & $22$ & $e_{-}(22)$, prime\\
$173$ & $17$ & $3.624$ & $17$ & $e_{-}(17)$, \{173, 347\} \\
$131$ & $16$ & $3.606$ & $16$ & $e_{-}(16), \{131, 263\}$ \\
$2767$ & $26$ & $3.604$ & $26$ & $e_{-}(26)$, prime\\
$2777$ & $26$ & $3.602$ & $26$ & $e_{-2}(26)$, prime \\
$823$ & $22$ & $3.600$ & $22$ & $e_{-2}(22)$, prime \\
$1123$ & $23$ & $3.598$ & $23$ & $e_{-}(23)$, prime \\
$2077$ & $25$ & $3.596$ & $25$ & $e_{-}(25), 31\cdot 67$ \\
$2083$ & $25$ & $3.594$ & $25$ & $e_{-2}(25)$, prime \\
$617$ & $21$ & $3.591$ & $21$ & $e_{-}(21)$, prime\\
$619$ & $21$ & $3.589$ & $21$ & $e_{-2}(21)$, prime\\
$29$ & $11$ & $3.589$ & $11$ & $e_{-}(11), \{29, 59\}$ \\
\hline
\end{longtable}
\end{center}

\begin{center}
\begin{longtable}{|c|r|c|r|c|r|c|r|}
\caption{$e_{-}(n)$}
\label{table:e1} \\
\hline
\multicolumn{1}{| c |}{$ n $} & \multicolumn{1}{ c |}{$e_{-}(n)$} & \multicolumn{1}{ c |}{$ n $} & \multicolumn{1}{ c |}{$e_{-}(n)$} & 
\multicolumn{1}{ c |}{$ n $} & \multicolumn{1}{ c |}{$e_{-}(n)$} & 
 \multicolumn{1}{ c |}{$ n $} & \multicolumn{1}{ c |}{$e_{-}(n)$}\\ \hline
\endfirsthead

1 & $1$ & 22 & $787$ & 43 & $718603$ & 64 & $666183787$\\ 
2 & $2$ & 23 & $1123$ & 44 & $973373$ & 65 & $913230103$\\   
3 & $3$ & 24 & $1571$ & 45 & $1291853$ & 66 & $1233996593$\\
4 & $4$ & 25 & $2077$ & 46 & $1800103$ & 67 & $1729098403$\\  
5 & $5$ & 26 & $2767$ & 47 & $2421403$ & 68 & $2334859277$\\   
6 & $7$ & 27 & $4153$ & 48 & $3377981$ & 69 & $3331952237$\\  
7 & $10$ & 28 & $5443$ & 49 & $4831963$ & 70 & $4649603213$\\  
8 & $11$ & 29 & $7963$ & 50 & $6834397$ & 71 & $6678905357$\\
9 & $17$ & 30 & $10733$ & 51 & $9157783$ & 72 & $9120679123$\\   
10 & $22$ & 31 & $13997$ & 52 & $12818347$ & 73 & $12457415693$\\  
11 & $29$ & 32 & $21101$ & 53 & $16345543$ & 74 & $17584630157$\\   
12 & $41$ & 33 & $27997$ & 54 & $23360983$ & 75 & $24864130483$\\ 
13 & $58$ & 34 & $36643$ & 55 & $34457573$ & 76 & $34145983337$\\  
14 & $67$ & 35 & $49747$ & 56 & $47377327$ & 77 & $47465340437$\\ 
15 & $101$ & 36 & $72103$ & 57 & $64071257$ & 78 & $68764257677$\\  
16 & $131$ & 37 & $99317$ & 58 & $87559337$ & 79 & $93131041603$\\  
17 & $173$ & 38 & $143239$ & 59 & $122103677$ & 80 & $132278645117$\\  
18 & $262$ & 39 & $179107$ & 60 & $174116563$ & 81 & $182226549067$\\ 
19 & $346$ & 40 & $260213$ & 61 & $247039907$ & & $$\\
20 & $461$ & 41 & $339323$ & 62 & $344781077$ & & $$\\
21 & $617$ & 42 & $508987$ & 63 & $467961763$ & & $$\\ 

\hline 
\end{longtable}
\end{center} 

\footnotesize
\begin{center}
\begin{longtable}{|c|r|r|r|l|}
\caption{Prime factorizations of numbers close to $e_{-}(n)$}
\label{table:e} \\
\hline
\multicolumn{1}{|c|}{$ n $} & \multicolumn{1}{c|}{$e_{-}(n)-2$} & \multicolumn{1}{c|}{$e_{-}(n)-1$} & \multicolumn{1}{c|}{$e_{-}(n)$} & \multicolumn{1}{c|}{$e_{-}(n)+1$} \\ \hline
\endfirsthead

1 & -- & -- & $1$ & $2$\\
2 & -- & $1$ & $2$ & $3$\\
3 & $1$ & $2$ & $3$ & $2^2$\\
4 & $2$ & $3$ & $2^2$ & $5$\\
5 & $3$ & $2^2$ & $5$ & $2\cdot3$\\
6 & $5$ & $2\cdot3$ & $7$ & $2^3$\\
7 & $2^3$ & $3^2$ & $2\cdot5$ & $11$\\
8 & $3^2$ & $2\cdot5$ & $11$ & $2^2\cdot3$\\
9 & $3\cdot5$ & $2^4$ & $17$ & $2\cdot3^2$\\
10 & $2^2\cdot5$ & $3\cdot7$ & $2\cdot11$ & $23$\\
11 & $3^3$ & $2^2\cdot7$ & $29$ & $2\cdot3\cdot5$\\
12 & $3\cdot13$ & $2^3\cdot5$ & $41$ & $2\cdot3\cdot7$\\
13 & $2^3\cdot7$ & $3\cdot19$ & $2\cdot29$ & $59$\\
14 & $5\cdot13$ & $2\cdot3\cdot11$ & $67$ & $2^2\cdot17$\\
15 & $3^2\cdot11$ & $2^2\cdot5^2$ & $101$ & $2\cdot3\cdot17$\\
16 & $3\cdot43$ & $2\cdot5\cdot13$ & $131$ & $2^2\cdot3\cdot11$\\
17 & $3^2\cdot19$ & $2^2\cdot43$ & $173$ & $2\cdot3\cdot29$\\
18 & $2^2\cdot5\cdot13$ & $3^2\cdot29$ & $2\cdot131$ & $263$\\
19 & $2^3\cdot43$ & $3\cdot5\cdot23$ & $2\cdot173$ & $347$\\
20 & $2^3\cdot17$ & $2^2\cdot5\cdot23$ & $461$ & $2\cdot3\cdot7\cdot11$\\
21 & $3\cdot5\cdot41$ & $2^3\cdot7\cdot11$ & $617$ & $2\cdot3\cdot103$\\
22 & $5\cdot157$ & $2\cdot3\cdot131$ & $787$ & $2^2\cdot197$\\
23 & $19\cdot59$ & $2\cdot3\cdot11\cdot17$ & $1123$ & $2^2\cdot281$\\
24 & $3\cdot523$ & $2\cdot5\cdot157$ & $1571$ & $2^2\cdot3\cdot131$\\
25 & $5^2\cdot83$ & $2^2\cdot3\cdot173$ & $31\cdot67$ & $2\cdot1039$\\
26 & $5\cdot7\cdot79$ & $2\cdot3\cdot461$ & $2767$ & $2^4\cdot173$\\
27 & $7\cdot593$ & $2^3\cdot3\cdot173$ & $4153$ & $2\cdot31\cdot 67$\\
28 & $5441$ & $2\cdot3\cdot907$ & $5443$ & $2^2\cdot1361$\\
29 & $19\cdot419$ & $2\cdot3\cdot1327$ & $7963$ & $2^2\cdot11\cdot181$\\
30 & $3\cdot7^2\cdot73$ & $2^2\cdot2683$ & $10733$ & $2\cdot3\cdot1789$\\
31 & $3^2\cdot5\cdot311$ & $2^2\cdot3499$ & $13997$ & $2\cdot3\cdot2333$\\
32 & $3\cdot13\cdot541$ & $2^2\cdot5^2\cdot211$ & $21101$ & $2\cdot3\cdot3517$\\
33 & $5\cdot11\cdot509$ & $2^2\cdot3\cdot2333$ & $27997$ & $2\cdot13999$\\
34 & $11\cdot3331$ & $2\cdot3\cdot31\cdot197$ & $36643$ & $2^2\cdot9161$\\
35 & $5\cdot9949$ & $2\cdot3\cdot8291$ & $49747$ & $2^2\cdot12437$\\
36 & $72101$ & $2\cdot3\cdot61\cdot197$ & $72103$ & $2^3\cdot9013$\\
37 & $3^2\cdot5\cdot2207$ & $2^2\cdot7\cdot3547$ & $99317$ & $2\cdot3\cdot16553$\\
38 & $227\cdot631$ & $2\cdot3\cdot23873$ & $143239$ & $2^3\cdot5\cdot3581$\\
39 & $5\cdot113\cdot317$ & $2\cdot3\cdot29851$ & $179107$ & $2^2\cdot44777$\\
40 & $3\cdot7\cdot12391$ & $2^2\cdot65053$ & $260213$ & $2\cdot3\cdot31\cdot1399$\\
41 & $3\cdot19\cdot5953$ & $2\cdot169661$ & $339323$ & $2^2\cdot3\cdot28277$\\
42 & $5\cdot101797$ & $2\cdot3^2\cdot28277$ & $508987$ & $2^2\cdot127247$\\
43 & $13\cdot167\cdot331$ & $2\cdot3\cdot229\cdot523$ & $718603$ & $2^2\cdot179651$\\
44 & $3\cdot7\cdot46351$ & $2^2\cdot243343$ & $973373$ & $2\cdot3\cdot162229$\\
45 & $3^2\cdot11\cdot13049$ & $2^2\cdot322963$ & $619\cdot2087$ & $2\cdot3\cdot215309$\\
46 & $1013\cdot1777$ & $2\cdot3\cdot300017$ & $1800103$ & $2^3\cdot83\cdot2711$\\
47 & $419\cdot5779$ & $2\cdot3\cdot403567$ & $2421403$ & $2^2\cdot131\cdot4621$\\
48 & $3^2\cdot11\cdot149\cdot229$ & $2^2\cdot5\cdot168899$ & $3377981$ & $2\cdot3\cdot562997$\\
49 & $17\cdot284233$ & $2\cdot3\cdot805327$ & $4831963$ & $2^2\cdot223\cdot5417$\\
50 & $5\cdot19\cdot71941$ & $2^2\cdot3\cdot569533$ & $6834397$ & $2\cdot3417199$\\
51 & $17\cdot199\cdot2707$ & $2\cdot3\cdot1526297$ & $9157783$ & $2^3\cdot1144723$\\
52 & $5\cdot31\cdot82699$ & $2\cdot3\cdot2136391$ & $12818347$ & $2^2\cdot29\cdot110503$\\
53 & $16345541$ & $2\cdot3\cdot2724257$ & $16345543$ & $2^3\cdot2043193$\\
54 & $7\cdot3337283$ & $2\cdot3\cdot3893497$ & $23360983$ & $2^3\cdot2920123$\\
55 & $3^2\cdot1259\cdot3041$ & $2^2\cdot17\cdot506729$ & $34457573$ & $2\cdot3\cdot5742929$\\
56 & $5^2\cdot1895093$ & $2\cdot3\cdot853\cdot9257$ & $79\cdot599713$ & $2^4\cdot2961083$\\
57 & $3\cdot5\cdot4271417$ & $2^3\cdot8008907$ & $64071257$ & $2\cdot3\cdot1193\cdot8951$\\
58 & $3^2\cdot5\cdot1945763$ & $2^3\cdot10944917$ & $87559337$ & $2\cdot3\cdot14593223$\\
59 & $3^2\cdot5^2\cdot542683$ & $2^2\cdot30525919$ & $122103677$ & $2\cdot3\cdot409\cdot49757$\\
60 & $37\cdot4705853$ & $2\cdot3\cdot29019427$ & $174116563$ & $2^2\cdot4349\cdot10009$\\
61 & $3\cdot5\cdot7\cdot2352761$ & $2\cdot123519953$ & $137\cdot1803211$ & $2^2\cdot3\cdot2683\cdot7673$\\
62 & $3\cdot5^2\cdot4597081$ & $2^2\cdot86195269$ & $344781077$ & $2\cdot3\cdot3823\cdot15031$\\
63 & $239\cdot1957999$ & $2\cdot3\cdot4931\cdot15817$ & $467961763$ & $2^2\cdot116990441$\\
64 & $5\cdot41\cdot811\cdot4007$ & $2\cdot3\cdot347\cdot319973$ & $666183787$ & $2^2\cdot166545947$\\
65 & $7^2\cdot18637349$ & $2\cdot3\cdot11059\cdot13763$ & $913230103$ & $2^3\cdot199\cdot573637$\\
66 & $3\cdot19\cdot223\cdot97081$ & $2^4\cdot77124787$ & $1233996593$ & $2\cdot3\cdot9337\cdot22027$\\
67 & $19\cdot91005179$ & $2\cdot3\cdot9431\cdot30557$ & $1729098403$ & $2^2\cdot11\cdot39297691$\\
68 & $3\cdot5^2\cdot7\cdot181\cdot24571$ & $2^2\cdot583714819$ & $2334859277$ & $2\cdot3\cdot389143213$\\
69 & $3^2\cdot5\cdot74043383$ & $2^2\cdot359\cdot2320301$ & $3331952237$ & $2\cdot3\cdot555325373$\\
70 & $3^2\cdot11\cdot46965689$ & $2^2\cdot1162400803$ & $4649603213$ & $2\cdot3\cdot774933869$\\
71 & $3^3\cdot5\cdot49473373$ & $2^2\cdot1669726339$ & $6678905357$ & $2\cdot3\cdot137\cdot8125189$\\
72 & $82301\cdot110821$ & $2\cdot3\cdot1520113187$ & $9120679123$ & $2^2\cdot2280169781$\\
73 & $3^2\cdot7\cdot1009\cdot195973$ & $2^2\cdot4327\cdot719749$ & $12457415693$ & $2\cdot3 \cdot2076235949$\\
74 & $3^2\cdot5\cdot281\cdot1390639$ & $2^2\cdot4396157539$ & $17584630157$ & $2\cdot3\cdot131\cdot22372303$\\
75 & $229\cdot1531\cdot70919$ & $2\cdot3\cdot4817\cdot860291$ & $24864130483$ & $2^2\cdot14779\cdot420599$\\
76 & $3\cdot5\cdot17\cdot5711\cdot23447$ & $2^3\cdot4268247917$ & $34145983337$ & $2\cdot3\cdot5690997223$\\
77 & $3^2\cdot5\cdot61\cdot17291563$ & $2^2\cdot1373\cdot8642633$ & $47465340437$ & $2\cdot3\cdot7910890073$\\
78 & $3^2\cdot5^2\cdot305618923$ & $2^2\cdot17191064419$ & $68764257677$ & $2\cdot3\cdot17\cdot674159389$\\
79 & $13\cdot193\cdot1033\cdot35933$ & $2\cdot3\cdot389\cdot39901903$ & $93131041603$ & $2^2\cdot23282760401$\\
80 & $3^2\cdot5\cdot3583\cdot820409$ & $2^2\cdot33069661279$ & $132278645117$ & $2\cdot3\cdot22046440853$\\
81 & $5\cdot 11 \cdot 1013 \cdot 3270691$ & $2 \cdot 3 \cdot 1613 \cdot 18828947$ & $182226549067$ & $2^2 \cdot 45556637267$\\ 

\hline 
\end{longtable}
\end{center} 
\normalsize

\pdfbookmark[1]{4. $P$-algorithms}{palgo}
\section{$P$-algorithms}
\label{palgo}

In this section we will explore a family of ``deterministic'' algorithms for building representations of numbers in basis $\{1,+,\cdot\}$. ``Deterministic'' means that these algorithms do not use searching over trees, but are building expressions directly from the numbers to be represented.

Let $P$ be a non-empty finite set of primes, for example, $P=\{2\}$, or $P=\{5, 11\}$. 

Let us define the following algorithm ($P$-algorithm). It is building an expression of a number $n>0$ in basis $\{1,+,\cdot\}$ by subtracting 1's and by dividing (whenever possible) by primes from the set $P$. More precisely, $P$-algorithm proceeds by applying of the following steps:

Step 1. If $n=1$ then represent $n$ as 1, and finish.

Step 2. If $n=p$ for some $p \in P$, then represent $n$ as $ex(p)$, where $ex(p)$ is some shortest expression of the number $p$ in basis $\{1,+,\cdot\}$, and finish.   
 
Step 3. If $n>1, n\notin P$ and $n$ is divisible by some $p \in P$, then represent $n$ as $ex(p)\cdot \frac{n}{p}$ (where $ex(p)$ is some shortest expression of the number $p$) and continue by processing the number $\frac{n}{p}$.

Step 4. If $n>1$ and $n$ is divisible by none of $p \in P$, then represent $n$ as $1+(n-1)$ and continue by processing the number $n-1$.

For example, consider the work of  the $\{5, 11\}$-algorithm:
\[157=1+1+1+11\cdot(1+1+1+1+5\cdot(1+1));\]
\[77=1+1+11\cdot(1+1+5).\]

\begin{definition}
The number of ones in the expression built by $P$-algorithm for the number $n$ does not depend on the order of application of Steps 1-4, let us denote this number by $\fp{n}$. The corresponding logarithmic complexity for $n>1$ is denoted by
$\fpl{n} = \frac{\fp{n}}{\log_3 n}$.
\end{definition}

For example, if $P=\{5, 11\}$:
\[\fp{157}=3+\f{11}+4+\f{5}+2=3+8+4+5+2=20;\]
\[\fp{77}=2+\f{11}+2+\f{5}=2+8+2+5=17.\]

Of course, for any $P$: $\fp{1}=1; \fp{2}=2;\fp{3}=3;\fp{4}=4;\fp{5}=5$.

\begin{proposition}
\label{p_algo_1}
(Lower bound) For any $P$, and all $n>1$,
 \[3 \leq \fl{n} \leq \f{n}_{P,log}.\]
This lower bound cannot be improved - the equality holds at least for $n=3$. 
\end{proposition}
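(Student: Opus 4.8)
The plan is to split the double inequality $3 \leq \fl{n} \leq \fpl{n}$ into its two halves and then verify that the constant $3$ is actually attained, so that it cannot be raised. The left inequality $\fl{n} \geq 3$ is the standard lower bound for logarithmic complexity, already recorded in the Introduction as the left endpoint of the spectrum $[3, 4.755]$; the right inequality is where the $P$-algorithm enters, and it should follow almost immediately from the fact that $\f{n}$ is defined as a \emph{minimum}.

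For $\fl{n} \geq 3$ I would reduce it to $\f{n} \geq 3\log_3 n$ and obtain the latter from material already at hand. Combining the elementary bound $\fm{n} \leq \f{n}$ with the lower bound $\fm{n} \geq 3\log_3 n$ of Theorem \ref{minus_main} (which itself rests on Selfridge's formulas in Proposition \ref{e1minus}) gives $\f{n} \geq \fm{n} \geq 3\log_3 n$ at once. Dividing by $\log_3 n > 0$ then yields $\fl{n} \geq 3$ for every $n > 1$.

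The inequality $\fl{n} \leq \fpl{n}$ carries the actual content, although it is short. The key observation is that the $P$-algorithm, run on $n$, terminates and produces an honest expression of $n$ in the basis $\{1,+,\cdot\}$: Steps 1 and 2 emit the legitimate subexpressions $1$ and $ex(p)$, Step 3 wraps the current expression in a multiplication $ex(p)\cdot(\cdots)$, and Step 4 wraps it in an addition $1+(\cdots)$, so no illegal symbol is ever introduced, while the recursion strictly decreases the number being processed (from $n$ to $\tfrac{n}{p}$ or to $n-1$) and hence halts. That expression contains exactly $\fp{n}$ ones, and since $\f{n}$ is the minimum number of ones over \emph{all} expressions for $n$, we get $\f{n} \leq \fp{n}$; dividing again by $\log_3 n > 0$ gives $\fl{n} \leq \fpl{n}$.

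Finally, for sharpness I would evaluate everything at $n=3$: since $\f{3}=3$ and, for every $P$, $\fp{3}=3$ (a value already recorded in the excerpt), both $\fl{3}$ and $\fpl{3}$ equal $3/\log_3 3 = 3$, so the whole chain collapses to $3 = 3 = 3$ and the constant $3$ cannot be improved. I do not expect a genuine obstacle in any of this; the only points demanding care are verifying that the $P$-algorithm always yields a well-formed expression (so that $\f{n} \leq \fp{n}$ is legitimate and not merely suggestive) and tracking the sign of $\log_3 n$ when dividing, which is what forces the restriction $n>1$.
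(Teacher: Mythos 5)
Your proof is correct. The paper actually states Proposition \ref{p_algo_1} without any proof, and your argument supplies exactly the routine reasoning one would expect: the $P$-algorithm terminates and outputs a well-formed expression with $\fp{n}$ ones, so $\f{n}\leq\fp{n}$ by minimality; the left inequality is the classical bound $\f{n}\geq 3\log_3 n$ (your detour through $\fm{n}\leq\f{n}$ and Theorem \ref{minus_main} is slightly roundabout but valid); and $\f{3}=\fp{3}=3$ with $\log_3 3=1$ gives equality throughout at $n=3$.
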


\begin{hypo}
\label{p_algo_hypo}
(Upper bound) Let $q$ be the minimum number in $P$. Then, for all $n>1$,
\[\fpl{n} \leq \fl{q}+\frac{q-1}{\log_3 q}.\]
\end{hypo}

\begin{proposition}
\label{p_algo_partly}
The assertion of Hypothesis \ref{p_algo_hypo} holds, if the number q is such that for all $p \in P$:
\[\frac{\f{p}+q-2}{\log_q p} \leq \f{q}+q-1.\]
\end{proposition}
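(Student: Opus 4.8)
The plan is to recast the statement in a cleaner, purely additive form and then prove it by strong induction on $n$, using an amortized (``look-ahead'') charging of the subtraction steps against the divisions that follow them.

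First I would strip away the logarithmic normalization. Writing $c=\f{q}+q-1$, the conclusion of Hypothesis \ref{p_algo_hypo}, namely $\fpl{n}\le\fl{q}+\frac{q-1}{\log_3 q}$, is (after multiplying by $\log_3 n$ and using $\frac{\log_3 n}{\log_3 q}=\log_q n$) equivalent to the single inequality
\[\fp{n}\le c\log_q n\qquad\text{for all }n>1.\]
In the same way the assumption of the proposition, $\frac{\f{p}+q-2}{\log_q p}\le\f{q}+q-1$, becomes $\f{p}\le c\log_q p-(q-2)$ for every $p\in P$; in particular $\f{p}\le c\log_q p$, and for $p=q$ it reads $\f{q}\le c$, which holds with slack $q-1$. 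I would then prove $\fp{n}\le c\log_q n$ by strong induction on $n$.

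The division steps are the easy part and ``pay for themselves''. If $n=p\in P$ (Step 2) then $\fp{n}=\f{p}\le c\log_q p=c\log_q n$. If $n\notin P$ is divisible by some $p\in P$ (Step 3) then $\fp{n}=\f{p}+\fp{n/p}$ with $2\le n/p<n$, so the induction hypothesis gives $\fp{n}\le\f{p}+c\log_q(n/p)=c\log_q n-\bigl(c\log_q p-\f{p}\bigr)\le c\log_q n$.

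The subtraction step (Step 4) is the crux and cannot be treated one unit at a time, since $1+c\log_q(n-1)>c\log_q n$ for large $n$. Instead I would look ahead to the next division: from an $n$ divisible by no $p\in P$, the algorithm subtracts $1$ a total of $j$ times until reaching the first $m=n-j$ divisible by some $p\in P$. Since among any $q$ consecutive integers one is a multiple of $q\in P$, we get $1\le j\le q-1$ as soon as $n\ge q$. Then $\fp{n}=j+\f{p}+\fp{m/p}$ (interpreting $\fp{m/p}=0$ when $m=p$), and with $n'=m/p$ it remains to verify $j+\f{p}\le c\log_q(n/n')=c\log_q p+c\log_q\frac{n}{n-j}$. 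Using $\f{p}\le c\log_q p-(q-2)$ this reduces to $j-(q-2)\le c\log_q\frac{n}{n-j}$, which is automatic whenever $j\le q-2$ because the left side is then $\le 0$. The only tight situation is a maximal run $j=q-1$: here the intermediate values carried no factor from $P$, hence none was divisible by $q$, so the multiple of $q$ forced by the pigeonhole must be $m=n-(q-1)$ itself; by the order-independence of $\fp{\cdot}$ I may then take $p=q$, giving $j+\f{q}=q-1+\f{q}=c$, while the budget satisfies $c\log_q(n/n')\ge c$ since $n/n'\ge q$, and the inequality closes.

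Finally I would dispose of the boundary cases in which subtraction reaches $1$ before meeting any multiple of a prime in $P$, i.e.\ $1<n<q$; there $\fp{n}=(n-1)+\fp{1}=n$, and one checks directly that $n\le c\log_q n$, which holds comfortably because $c\ge q+2$. I expect the main obstacle to be precisely this amortized subtraction analysis: recognizing that a full-length run $j=q-1$ must be followed by a division by $q$, so that the $q-1$ subtracted ones together with $\f{q}$ consume exactly the budget $c$ allotted to a division by $q$, with the logarithmic slack $c\log_q\frac{n}{n-j}\ge 0$ absorbing the remaining discrepancy.
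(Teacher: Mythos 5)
Your argument is correct and is essentially the paper's own proof: your two cases, a run of $j\le q-2$ subtractions followed by division by some $p\in P$ versus a forced run of $j=q-1$ subtractions followed by division by $q$, are exactly the paper's ``macro'' operations $r+pX$ ($0\le r\le q-2$) and $q-1+qX$, with your strong induction playing the role of the paper's minimal-counterexample framing. The only difference is that you actually carry out the verification that the paper dismisses with ``in either of cases a contradiction can be derived,'' including the key observation that a maximal run forces the next divisor to be $q$ so that $q-1+\f{q}$ exactly matches the budget $c$.
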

\begin{proof}
The assertion of the Hypothesis holds obviously for $n=2$. It holds also for $2<n \leq q-1$. Indeed, since $\frac{r}{\ln r}$ is growing at $r>e$, we have for these n,
 \[\frac{\fp{n}}{\log_3 n}=\frac{n}{\log_3 n} < \frac{q-1}{\log_3 q}.\]

So, let us assume that $n \geq q$ is the least number violating the inequality of the Hypothesis, namely:
\[\frac{\fp{n}}{\log_q n} > \f{q}+q-1.\]
Consider the last ``macro'' operation used by the $P$-algorithm to build the expression of the number $n$. It is either $r+pX$, where $0 \leq r \leq q-2; p \in P$, or $q-1+qX$. In either of cases a contradiction can be derived.
\end{proof}

Theorem \ref{p_algo_partly} allows to prove many cases of Hypothesis \ref{p_algo_hypo}.

1. $2 \in P$. Then $q=2$ and the condition of the Theorem holds obviously - it is well known that $\f{p} \leq 3 \log_2 p$ for all $p>1$.

2. $2 \notin P$ and $3\in P$. Then q=3, let us verify that $\frac{1+\f{p}}{\log_3 p} \leq \f{3}+3-1=5$ for all $p>3$. Since  $\f{p} \leq 3 \log_2 p$, we have: 
\[\frac{1+\f{p}}{\log_3 p} \leq \frac{1}{\log_3 p}+\frac{3}{\log_3 2}<\frac{1}{\log_3 p}+4.755,\]
hence, the required inequality holds for $p \geq 89$. As one can verify directly, it holds also for $3<p<89$ as well.

3. $q=5$. Let us verify that $\frac{3+\f{p}}{\log_5 p} \leq \f{5}+5-1=9$ for all $p>5$. Since  $\f{p} \leq 3 \log_2 p$, we have: 
\[\frac{3+\f{p}}{\log_5 p}=\frac{3}{\log_5 p}+\frac{\f{p}}{\log_3 p}\log_3 5 <\frac{3}{\log_5 p}+6.966,\]
hence, the required inequality holds for $p \geq 11$. As one can verify directly, it holds also for $3<p<11$ as well.

4. $q=7$. Let us verify that $\frac{5+\f{p}}{\log_7 p} \leq \f{7}+7-1=12$ for all $p>7$. Since  $\f{p} \leq 3 \log_2 p$, we have: 
\[\frac{5+\f{p}}{\log_7 p}=\frac{5}{\log_7 p}+\frac{\f{p}}{\log_3 p}\log_3 7 <\frac{5}{\log_7 p}+8.423,\]
hence, the required inequality holds for all $p \geq 16$. As one can verify directly, it holds also for $5<p<16$ as well.

5. $q=11$. Let us verify that $\frac{9+\f{p}}{\log_{11} p} \leq \f{11}+11-1=18$ for all $p>11$. Since  $\f{p} \leq 3 \log_2 p$, we have: 
\[\frac{9+\f{p}}{\log_{11} p}=\frac{9}{\log_{11} p}+\frac{\f{p}}{\log_3 p}\log_3 11 <\frac{9}{\log_{11} p}+10.379,\]
hence, the required inequality holds for all $p \geq 17$. As one can verify directly, it holds also for $7<p<17$ as well.

Unfortunately, this method does not generalize to all cases. The smallest prime number violating the condition of Theorem \ref{p_algo_partly}, is $q=163$. If we take $p=167$, then:
\[\frac{163-2+\f{167}}{\log_{163} 167}=\frac{161+17}{\log_{163} 167} > 177.156 > 163-1+\f{163}=162+15=177.\]

For the general case, we have proved a somewhat weaker

\begin{theorem}
\label{p_algo_weak}
Let $q$ be the minimum number in $P$, and $Q$ - the number in $P$ with the maximum $\fl{Q}$. Then, for all $n>1$, 
\[\fpl{n} \leq \fl{Q}+\frac{q-1}{\log_3 q}.\]
\end{theorem}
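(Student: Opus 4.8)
The plan is to prove the equivalent inequality $\fp{n} \le C\log_3 n$ for all $n>1$, where $C = \fl{Q}+\frac{q-1}{\log_3 q}$, by strong induction on $n$. Since $\fp{n}$ does not depend on the order in which Steps 1--4 are applied, I may fix one convenient run of the $P$-algorithm. Because $q=\min P$ divides one of any $q$ consecutive integers, the algorithm, starting from $n$, performs at most $q-1$ subtractions (Step 4) before reaching a number $n-r$ (with $0\le r\le q-1$) divisible by some prime $p\in P$; it then divides by $p$ (Step 3) and continues with $(n-r)/p$. Hence the first ``macro'' step gives
\[\fp{n} = r + \f{p} + \fp{\tfrac{n-r}{p}},\qquad 0\le r\le q-1,\ p\in P.\]

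Assuming $(n-r)/p>1$, the induction hypothesis yields $\fp{(n-r)/p}\le C\log_3(n-r)-C\log_3 p$, so it suffices to show
\[r+\f{p}-C\log_3 p \le C\bigl(\log_3 n-\log_3(n-r)\bigr).\]
The right-hand side is nonnegative since $r\ge 0$. For the left-hand side I would write $\f{p}=\fl{p}\log_3 p$ and use the definition of $Q$ (so $\fl{p}\le\fl{Q}$ for every $p\in P$) together with $p\ge q$:
\[\f{p}-C\log_3 p = (\fl{p}-\fl{Q})\log_3 p-\frac{q-1}{\log_3 q}\log_3 p \le -\frac{q-1}{\log_3 q}\log_3 q = -(q-1).\]
Therefore $r+\f{p}-C\log_3 p\le r-(q-1)\le 0$, which is dominated by the nonnegative right-hand side, completing the inductive step.

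It remains to settle the base cases. When $(n-r)/p=1$, i.e. $n=p+r$ with $p\in P$, Step 2 terminates the run and $\fp{n}=r+\f{p}$; here $n\ge p\ge q$ gives $C\log_3 n\ge C\log_3 p\ge \f{p}+(q-1)\ge \fp{n}$ by the same estimate as above. For $2\le n\le q-1$ no prime of $P$ divides $n$, so the algorithm subtracts down to $1$ and $\fp{n}=n$; the bound $n\le C\log_3 n$ follows from $\frac{n}{\log_3 n}\le\frac{q-1}{\log_3 q}\le C$, using that $x/\log_3 x$ is increasing for $x\ge 3$ (and checking $n=2$ directly). The main obstacle---and the reason this gives only the ``weak'' bound with $\fl{Q}$ in place of the $\fl{q}$ of Hypothesis \ref{p_algo_hypo}---is that the prime $p$ used in a division is merely \emph{some} prime of $P$ dividing $n-r$, and may be much larger than $q$; the estimate above absorbs this only by paying $\fl{Q}=\max_{p\in P}\fl{p}$. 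Obtaining $\fl{q}$ would require forcing divisions by $q$ itself, or a finer accounting of when the larger primes are actually used, which is precisely the difficulty left open by Hypothesis \ref{p_algo_hypo}.
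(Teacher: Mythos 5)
Your proof is correct, and it reaches the theorem by a genuinely different route from the paper. The paper unrolls the entire run of the algorithm into $n=r_1+p_1(r_2+p_2(\cdots(r_k+p_k\cdot r)))$ and bounds the two cost components globally: $\sum\f{p_i}\le\f{Q}\log_Q n$ via $\fl{p_i}\le\fl{Q}$, and $(\sum r_i+r')/\log_q n\le q-1$ via $rq^k\le n$ together with the mediant inequality $\frac{\sum a_j}{\sum b_j}\le\max\frac{a_j}{b_j}$; the latter forces an endgame case analysis (residual $r=2$ with $q\in\{3,5\}$) that the paper closes only by appealing to Proposition~\ref{p_algo_partly}. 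You instead do strong induction on $n$ with a single macro step $\fp{n}=r+\f{p}+\fp{\frac{n-r}{p}}$ and amortize locally: the $r\le q-1$ subtractions are paid for by $\frac{q-1}{\log_3 q}\log_3 p\ge q-1$, and the division by $\fl{p}\le\fl{Q}$. These are the same two estimates, but your local packaging is cleaner: no mediant trick, no exceptional residuals, and no dependence on Proposition~\ref{p_algo_partly}, because the awkward tail is absorbed into the base case $n\le q-1$, where the slack $\fl{Q}\ge 3$ inside $C=\fl{Q}+\frac{q-1}{\log_3 q}$ is available. Your correct observation that the right-hand side $C\log_3\frac{n}{n-r}\ge 0$ can simply be discarded is what makes the step so short.

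One small repair is needed in that base case: the intermediate claim $\frac{n}{\log_3 n}\le\frac{q-1}{\log_3 q}$ is false for $n$ near $q$ (already $n=q-1$ gives $\frac{q-1}{\log_3(q-1)}>\frac{q-1}{\log_3 q}$, and $n=2$ gives $\frac{2}{\log_3 2}\approx 3.17$, which exceeds $\frac{q-1}{\log_3 q}$ for small $q$). What you actually need is only $\frac{n}{\log_3 n}\le C$, and that does hold: monotonicity of $x/\log_3 x$ gives $\frac{n}{\log_3 n}\le\frac{q-1}{\log_3(q-1)}\le\frac{q-1}{\log_3 q}+\frac{1}{\ln 3\cdot\log_3 q\cdot\log_3(q-1)}<\frac{q-1}{\log_3 q}+3\le C$ for $q\ge 5$ and $3\le n\le q-1$, while $n=2$ is checked directly against $C\ge 3+\frac{1}{\log_3 2}$. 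With that substitution the argument is complete. (The paper's own proof of Proposition~\ref{p_algo_partly} contains the same overstatement, so you are in good company.)
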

\begin{proof}
Consider the expression generated by the $P$-algorithm for the number $n$:
\[n=r_1+p_1 (r_2+p_2 (... (r_k+p_k \cdot r))),\]
where for all i: $p_i \in P; 0\leq r_i\leq q-1; 1\leq r \leq q-1$.
Then:
\[\fp{n}=\sum\limits_{i=1}^{k}r_i + \sum\limits_{i=1}^{k}\f{p_i}+r',\]
where $r'=0$, if $r=1$, else $r'=r$.

By setting all $r_i=0$ we obtain that $r \prod\limits_{i=1}^{k} p_i \leq n$, and that $r q^k \leq n$, or $k+ \log_q r \leq \log_q n$. 

Since
\[\frac{\f{p_i}}{\log_Q p_i} \leq \frac{\f{Q}}{\log_Q Q}=\f{Q},\]
we obtain,
\[\sum\limits_{i=1}^{k}\f{p_i} \leq \sum\limits_{i=1}^{k} \f{Q} \log_Q p_i = \f{Q} \log_Q \prod\limits_{i=1}^k p_i \leq \f{Q} \log_Q n\ = \frac{\f{Q}}{\log_3 Q} \log_3 n.\]

It remains to prove that the following expression does not exceed $q-1$:
\[\frac{\sum\limits_{i=1}^{k}r_i +r'}{\log_q n} \leq \frac{k(q-1)+r'}{k +\log_q r }.\]
If $r=1$, then $r'=0$, and the expression is equal to $q-1$, so, let us assume that $r'=r>1$ (then also $q \geq 3$), and let us apply the following general inequality that holds for any positive real numbers $a_j, b_j$:
\[\frac{\sum a_j}{\sum b_j} \leq \max \frac{a_j}{b_j}.\]
So, it remains to prove that $\frac{r}{\log_q r} \leq q-1$. This is obvious for $2<r \leq q-1$, since $\frac{r}{\ln r}$ is growing at $r>e$.

It remains to consider the situation $r=2$. Since $\frac{2}{\log_q 2} \leq q-1$ holds for $q\geq 7$, only two exceptions remain: $q=3$ and $q=5$. But these are covered by the above-mentioned consequences of Theorem \ref{p_algo_partly}.
\end{proof}

The spectrum of $\fpl{n}$ is characterized by the following

\begin{theorem}
\label{p_algo_main}
Let $q$ be the minimum number in $P$, and $p$ - the number in $P$ with the minimum $\fl{p}$. Then:

(1) The values of $\fpl{n}$ fill up densely the interval $\left(\fl{p}, \fl{q}+\frac{q-1}{\log_3 q}\right)$.

(2) For any $\epsilon>0$ there exist only finitely many $n$ such that $3 \leq \fpl{n}<\fl{p}-\epsilon$.
\end{theorem}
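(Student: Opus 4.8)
My plan is to handle the two parts by different means: the lower endpoint (part (2)) by a direct estimate on the decomposition produced by the algorithm, and the density (part (1)) by exhibiting explicit families of numbers whose $P$-algorithm behaviour I can predict exactly.

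For part (2) I would reuse the decomposition from the proof of Theorem \ref{p_algo_weak}: writing $n=r_1+p_1(r_2+\cdots+p_k r)$ with $p_i\in P$, $0\le r_i\le q-1$, $1\le r\le q-1$, so that $\fp{n}=\sum r_i+\sum\f{p_i}+r'\ge\sum\f{p_i}$. Since $p$ minimises $\fl{\cdot}$ over $P$, every $\f{p_i}=\fl{p_i}\log_3 p_i\ge\fl{p}\log_3 p_i$, whence $\fp{n}\ge\fl{p}\log_3\prod p_i$. Expanding the nested expression and bounding the $r_i$-terms by a geometric series gives $n\le(\prod p_i)(1+r)\le q\prod p_i$, i.e.\ $\prod p_i\ge n/q$. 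Therefore $\fp{n}\ge\fl{p}(\log_3 n-\log_3 q)$ and $\fpl{n}\ge\fl{p}-\fl{p}\log_3 q/\log_3 n$, which is $\ge\fl{p}-\epsilon$ as soon as $\log_3 n\ge\fl{p}\log_3 q/\epsilon$; only finitely many $n$ can violate this. I expect this part to be routine.

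For part (1) I would cover the interval by two overlapping families. Below $\fl{q}$: for $n=p^a q^b$ the algorithm never subtracts (no prime of $P$ other than $p,q$ divides $p^a q^b$), so $\fp{n}=a\f{p}+b\f{q}$ and $\fpl{n}$ is the mediant $\tfrac{a\f{p}+b\f{q}}{a\log_3 p+b\log_3 q}$; letting $a/b$ range over the rationals fills $(\fl{p},\fl{q})$ densely. To reach and fill $(\fl{q},\fl{q}+\tfrac{q-1}{\log_3 q})$ I would use numbers whose $P$-algorithm execution is a prescribed mixture of plain divisions by $q$ (local ratio $\fl{q}$) and ``expensive'' macro-steps — subtract $q-1$, then divide by $q$ (local ratio $\fl{q}+\tfrac{q-1}{\log_3 q}$). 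Building $n$ from the inside out via the maps $x\mapsto qx$ and $x\mapsto qx+(q-1)$, the proportion of expensive steps tunes $\fpl{n}$ to any target in the sub-interval, with the all-expensive chains driving $\fpl{n}$ to the upper endpoint.

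The main obstacle, and the only genuinely delicate point, is to guarantee that the forced algorithm really performs the intended operations: since it divides whenever possible, I must ensure that no prime of $P\setminus\{q\}$ divides any intermediate value at the ``wrong'' moment along a computation that grows arbitrarily long. I would resolve this with a single congruence invariant. The residue $-1$ is the fixed point of the expensive map $x\mapsto qx+(q-1)$ modulo every prime, and it is ``strongly good'': when $x\equiv-1$, none of $qx+1,\dots,qx+(q-1)$, nor $qx$, is divisible by any $p'\in P\setminus\{q\}$, precisely because $p'>q$ forces $-1-i\not\equiv0\pmod{p'}$ for $1\le i\le q-1$. Hence, starting at the least positive $x_0\equiv-1\pmod{M}$, where $M=\prod_{p'\in P\setminus\{q\}}p'$ (with $M=1$, so no constraint, when $P=\{q\}$, recovering $n=q^a-1$), an all-expensive chain keeps every intermediate value $\equiv-1\pmod{M}$ and is executed verbatim by the algorithm. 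For mixed chains I would insert the plain steps in blocks whose length is a multiple of $L=\operatorname{lcm}_{p'}\operatorname{ord}_{p'}(q)$, so that $q^s\equiv1\pmod{M}$ returns the running value to $-1\pmod{M}$ just before each expensive step; throughout a block the running value is $\equiv-q^{j}\not\equiv0\pmod{p'}$, so the plain divisions are by $q$ as intended. Tuning the block lengths and their relative frequencies then realises a dense set of ratios throughout $(\fl{q},\fl{q}+\tfrac{q-1}{\log_3 q})$, which together with the $p^a q^b$ family completes the density claim.
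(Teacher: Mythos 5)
Your proposal is correct, and it splits into one part that mirrors the paper and one that genuinely departs from it. For part (1) your argument is essentially the paper's: the mediant family $p^aq^b$ densely covers $(\fl{p},\fl{q})$ (Lemma \ref{p_algo_main_2a}), and the upper sub-interval is covered by chains built from $x\mapsto qx$ and $x\mapsto qx+(q-1)$ kept $\equiv -1$ modulo the primes of $P\setminus\{q\}$; the paper's Lemma \ref{p_algo_main_2b} uses exactly this invariant, seeded by the Chinese Remainder Theorem, the only cosmetic difference being that it gathers all plain divisions into a single block of length $kl$ with $k=\prod_{p'\in P\setminus\{q\}}(p'-1)$ adjacent to $n_0$, whereas you interleave blocks of length a multiple of $L=\operatorname{lcm}_{p'}\operatorname{ord}_{p'}(q)$ — interchangeable, since only the counts of the two step types enter the limiting ratio $\frac{\f{q}+q-1+t\f{q}}{1+t}$. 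Part (2) is where you diverge: the paper's Lemma \ref{p_algo_main_2c} follows Altman--Zelinsky, introducing the $p$-defect $d_p(n)=\fp{n}-\f{p}\log_p n$, showing that neither allowed operation decreases it once the argument exceeds $\frac{1}{\sqrt[\f{p}]{p}-1}$, and extracting a uniform lower bound $d_p(n)\geq D$ from a finite tree of exceptional cases. You instead reuse the decomposition of Theorem \ref{p_algo_weak}, discard the nonnegative $r$-terms to get $\fp{n}\geq\sum\f{p_i}\geq\fl{p}\log_3\prod p_i$, and bound $n\leq(1+r)\prod p_i\leq q\prod p_i$ via the geometric series $\sum_{j}(q-1)p_1\cdots p_{j-1}\leq p_1\cdots p_k$ (valid since every $p_i\geq q$); this yields the explicit estimate $\fpl{n}\geq\fl{p}\left(1-\frac{\log_3 q}{\log_3 n}\right)$, which is shorter than the defect argument and gives a quantitative rate where the paper only gets an unspecified constant $D$. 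Both proofs are sound; your part (2) is arguably the cleaner of the two.
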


(1) and (2) of Theorem \ref{p_algo_main} follow from the lemmas below.

\begin{lemma}
\label{p_algo_main_2a}
Consider any two $p, q \in P, p < q$. Then the values of $\fpl{n}$ fill up densely the interval $(\fl{p}, \fl{q})$.
\end{lemma}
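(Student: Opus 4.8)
The plan is to realize a dense set of target values as logarithmic $P$-complexities of numbers of the special form $n = p^a q^b$ with $a, b$ positive integers, and then to invoke density of the rationals. The single most useful fact will be that on such $n$ the $P$-algorithm does nothing but factor out primes of $P$.

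First I would compute $\fp{p^a q^b}$ exactly. Starting the $P$-algorithm on $p^a q^b$, every number it ever produces is again of the form $p^{a'} q^{b'}$; as long as this exceeds $1$ it is divisible by a prime of $P$ (and, once its exponents sum to at least $2$, it is composite, hence not itself in $P$), so only Step 3 applies, and the algorithm finishes via Step 2 on a single prime. Thus the ``$+1$'' Step 4 is never invoked. Since the number of ones does not depend on the order of the steps, this yields the clean formula
\[\fp{p^a q^b} = a\,\f{p} + b\,\f{q}.\]

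Next I would rewrite the associated logarithmic complexity as a convex combination. Setting $\alpha = a\log_3 p$ and $\beta = b\log_3 q$,
\[\fpl{p^a q^b} = \frac{a\,\f{p} + b\,\f{q}}{a\log_3 p + b\log_3 q} = \frac{\alpha\,\fl{p} + \beta\,\fl{q}}{\alpha + \beta},\]
a weighted average of $\fl{p}$ and $\fl{q}$ with strictly positive weights, hence a point strictly between them. Writing $t = \alpha/\beta$, the value is $\frac{t\,\fl{p} + \fl{q}}{t+1}$, a continuous, strictly monotone function of $t \in (0,\infty)$ whose range is exactly the open interval between $\fl{p}$ and $\fl{q}$.

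Finally, the density step. As $a, b$ range over the positive integers, $t = \frac{a}{b}\cdot\frac{\log_3 p}{\log_3 q}$ ranges over $\frac{\log_3 p}{\log_3 q}\cdot\mathbb{Q}^{+}$, which is dense in $(0,\infty)$; continuity of the above function then forces the attained values $\fpl{p^a q^b}$ to be dense in the interval $(\fl{p}, \fl{q})$, as required. The hard part is really just the first step: one must argue carefully that the ``$+1$'' step genuinely never fires for $n = p^a q^b$, so that the exact formula $\fp{p^a q^b} = a\,\f{p} + b\,\f{q}$ holds without any additive slack; once that is secured, the convex-combination identity and the density of $\frac{\log_3 p}{\log_3 q}\cdot\mathbb{Q}^{+}$ finish the argument routinely.
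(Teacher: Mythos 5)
Your proof is correct and takes essentially the same route as the paper's: the paper's (much terser) proof likewise considers the numbers $p^a q^b$, states $\fpl{p^a q^b}=\frac{a\f{p}+b\f{q}}{a\log_3 p + b\log_3 q}$, and asserts that these values fill the interval densely. You have simply supplied the details the paper leaves implicit, namely the verification that Step 4 never fires on $p^a q^b$ and the convex-combination plus density-of-rationals argument.
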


\begin{proof}
Consider, for any positive integers $a, b$, the logarithmic complexity of the number $p^a q^b$: 
\[\fpl{p^a q^b}=\frac{a\f{p}+b\f{q}}{a\log_3 p + b\log_3 q}.\]
Values of this expression fill up densely the interval
\[\left(\frac{\f{p}}{\log_3 p}, \frac{\f{q}}{\log_3 q}\right).\]
\end{proof}

\begin{lemma}
\label{p_algo_main_2b}
Let $q$ be the minimum number in $P$. Then, the values of $\frac{\fp{n}}{\log_q n}$ fill up densely the interval $(\f{q}, \f{q}+q-1)$. Hence, the values of $\fpl{n}$ fill up densely the interval $\left(\fl{q}, \fl{q}+\frac{q-1}{\log_3 q}\right)$.
\end{lemma}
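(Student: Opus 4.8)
The plan is to realize a dense set of target ratios by building numbers $n$ whose $P$-algorithm trajectory I control exactly, assembling them from two elementary ``blocks''. A \emph{cheap} block sends a current value $v$ to $qv$: read backwards by the algorithm this is a single division by $q$, costing $\f{q}$ ones and raising $\log_q$ by $1$. An \emph{expensive} block sends $v$ to $qv+(q-1)$: backwards this is $q-1$ subtractions of $1$ followed by one division by $q$, costing $(q-1)+\f{q}$ ones and again raising $\log_q$ by $1$. Interleaving $a$ cheap and $b$ expensive blocks should give $\fp{n}\approx (a+b)\f{q}+b(q-1)$ against $\log_q n\approx a+b$, so the ratio tends to the weighted average $\f{q}+(q-1)\frac{b}{a+b}$ of the two block-ratios $\f{q}$ and $\f{q}+q-1$; letting $\frac{b}{a+b}$ run through the rationals in $(0,1)$ then fills $(\f{q},\f{q}+q-1)$ densely.

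Concretely, I would apply $a$ cheap blocks and then $b$ expensive ones to a small seed $s$, producing $n = q^{a+b}s + q^{b}-1$. Since the seed can be chosen bounded (see below), one has $\log_q n=(a+b)+O(1)$, while the cost splits as $a\f{q}$ for the cheap part, $b\big((q-1)+\f{q}\big)$ for the expensive part, and $\fp{s}=O(1)$ for the seed, so that
\[ \frac{\fp{n}}{\log_q n}=\f{q}+\frac{b(q-1)+O(1)}{(a+b)+O(1)} \longrightarrow \f{q}+(q-1)\,\theta \]
whenever $\frac{b}{a+b}\to\theta\in(0,1)$ with $a,b\to\infty$. This would give the first claim; the second follows at once, since $\log_q n=\frac{\log_3 n}{\log_3 q}$ yields $\fpl{n}=\frac{1}{\log_3 q}\cdot\frac{\fp{n}}{\log_q n}$, and dividing the interval by $\log_3 q$ turns $(\f{q},\f{q}+q-1)$ into $\big(\fl{q},\fl{q}+\frac{q-1}{\log_3 q}\big)$.

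The only real difficulty, and the step I would spend the most care on, is making sure the $P$-algorithm actually traverses this trajectory when $P$ contains primes other than $q$. The danger is that while the algorithm subtracts $1$'s inside an expensive block, an intermediate value $qv_t+j$ (with $1\le j\le q-1$) could be divisible by some $p'\in P$, $p'>q$, in which case Step 3 would force a division by $p'$ instead of the intended subtraction and wreck the count. I would rule this out with a single congruence. Modulo any $p'$, the affine map $w\mapsto qw+(q-1)$ has the fixed point $w\equiv -1$ (indeed $q\cdot(-1)+(q-1)=-1$), and this fixed point is \emph{safe}: if $v_t\equiv-1\pmod{p'}$ then $qv_t+j\equiv j-q\pmod{p'}$, one of $-1,-2,\dots,-(q-1)$, hence nonzero modulo $p'$ because $p'>q-1$. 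So it suffices to enter the expensive phase at a value that is $\equiv-1$ modulo every $p'\in P\setminus\{q\}$; by the Chinese Remainder Theorem I can choose the seed $s$ with $q^{a}s\equiv-1\pmod{p'}$ for all such $p'$ (possible since $\gcd(q,p')=1$), taking $s$ to be a least positive residue modulo $\prod_{p'}p'$, which is bounded independently of $a,b$, justifying $\fp{s}=O(1)$ and $\log_q s=O(1)$ above. The invariant $v_t\equiv-1$ is then preserved through all $b$ expensive blocks, so Step 4 genuinely applies at every subtraction (each $qv_t+j$ is also $\not\equiv0\bmod q$), while the cheap phase is automatically clean since each $q^{j}s$ is coprime to every $p'$. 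With the trajectory thus pinned down, the cost bookkeeping is exact up to the $O(1)$ seed term, and the density conclusion follows.
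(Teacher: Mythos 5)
Your proof is correct and follows essentially the same route as the paper's: the same two blocks $X \mapsto qX$ and $X \mapsto qX+(q-1)$, the same resulting numbers $q^{a+b}s+q^b-1$, and the same key observation that $-1$ is a fixed point of $w \mapsto qw+(q-1)$ modulo each $p' \in P\setminus\{q\}$, forcing the $P$-algorithm along the intended trajectory. The only immaterial difference is bookkeeping: the paper fixes one seed $n_0 \equiv -1 \pmod{p}$ for all $p\in P$ and restricts the number of cheap blocks to multiples of $\prod_{p'}(p'-1)$ via Fermat's little theorem, whereas you adjust the seed $s \equiv -q^{-a} \pmod{p'}$ by the Chinese Remainder Theorem for each $a$.
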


\begin{proof}
We will build the necessary filling up numbers $n$ by using two operations on $X$: $qX$ and $q-1+qX$.

Let us start from a number $n_0$ such that  $n_0\equiv -1 \pmod p$ for all $p\in P$. By Chinese Remainder Theorem, there is such an $n_0 < \prod\limits_{p\in P} p$.

By Fermat's Little Theorem, if $p\in P$ and $p\neq q$, then $q^{p-1}\equiv 1 \mod p$. Hence, for $k=\prod\limits_{p\in P\backslash\{q\}} (p-1)$ and any $l$ we have $q^{kl}\equiv 1\pmod p$ for all $p\in P\backslash\{q\}$. Let us apply the operation $qX$  $kl$ times to the number $n_0$, thus obtaining the number $n_1=q^{kl}n_0\equiv -1 \mod p$ for all $p\in P\backslash\{q\}$. 

Let us note the following property of our second operation $q-1+qX$: for any $p\in P$, and any $X$: if $X\equiv -1 \pmod p$, then $q-1+qX\equiv -1 \pmod p$.

Hence, if we will build the number $n$ from the number $n_1$ by applying m times the operation $q-1+qX$, then 
\[n=(q-1)\sum\limits_{j=0}^{m-1}q^j+q^{m+kl}n_0=q^{m+kl}n_0+q^m-1,\]
and all the numbers X built in this process (with $n$ included) will possess the property $X\equiv p-1 \mod p$ for all $p\in P$.

And hence, when building an expression for the number $n$, $P$-algorithm will be forced, first, to apply $m$ times the operation $\frac{X-(q-1)}{q}$, spending for that $m(\f{q}+q-1)$ ones and reaching the number $n_1=q^{kl}n_0$.

After this, $P$-algorithm will be forced to apply $kl$ times the operation $\frac{X}{q}$, spending for that $kl\f{q}$ ones and reaching the number $n_0$, for which it will spend $\fp{n_0}$ ones.

Hence, $\fp{n}=m(\f{q}+q-1)+kl\f{q}+\fp{n_0}$.

On the other hand, 
\[\log_q n = m+kl+\log_q\left(n_0+\frac{q^m-1}{q^{kl+m}}\right)=m+kl+\log_q(n_0+q^{-kl}(1-q^{-m}));\]
\[\frac{\fp{n}}{\log_q n}=\frac{\f{q}+q-1+\frac{l}{m}k\f{q}+\frac{1}{m}\fp{n_0}}{1+\frac{l}{m}k+\frac{1}{m}\log_q(n_0+q^{-kl}(1-q^{-m}))}.\]
If, in this expression, $m$ and $\frac{l}{m}$ tend to infinity, then the expression tends to $\f{q}$. On the other hand, if $l=1$ and $m$ tends to infinity, then the expression tends to $\f{q}+q-1$.

But how about the intermediate points between $\f{q}$ and $\f{q}+q-1$? For any $\epsilon>0$, if $m$ is large enough, then
\[\left|\frac{\fp{n}}{\log_q n}-\frac{\f{q}+q-1+\frac{l}{m}k\f{q}}{1+\frac{l}{m}k}\right|<\epsilon.\]
As a function of a real variable $x$, the expression $h(x)=\frac{\f{q}+q-1+k\f{q}x}{1+kx}$, when x is growing from 0 to infinity, is decreasing continuously from $\f{q}+q-1$ to $\f{q}$. So, if we take $\frac{l}{m}$ close enough to $x$, then we will have $\left|\frac{\fp{n}}{\log_q n}-h(x)\right|<2\epsilon$.
\end{proof}

\begin{lemma}
\label{p_algo_main_2c}
Let $p$ be the number in $P$ with the minimum $\fl{p}$. Then for any $\epsilon>0$ there exist only finitely many $n$ such that $\fpl{n}<\fl{p}-\epsilon$.
\end{lemma}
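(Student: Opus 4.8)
The plan is to reuse the explicit output of the $P$-algorithm exactly as recorded in the proof of Theorem~\ref{p_algo_weak}. For a given $n>1$ the algorithm produces
\[n=r_1+p_1 (r_2+p_2 (\cdots (r_k+p_k \cdot r))),\]
with $p_i\in P$, $0\le r_i\le q-1$ and $1\le r\le q-1$, together with
\[\fp{n}=\sum\limits_{i=1}^{k}r_i + \sum\limits_{i=1}^{k}\f{p_i}+r'.\]
Writing $P_k=\prod_{i=1}^k p_i$, the idea is to discard the nonnegative contributions $\sum r_i$ and $r'$ in the numerator and keep only the product part, while bounding $\log_3 n$ from above purely in terms of $P_k$. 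The engine of the argument is that $\fp{n}$ is at least $\fl{p}\cdot\log_3 P_k$, whereas $\log_3 n$ exceeds $\log_3 P_k$ by only a bounded additive constant, so the quotient is forced up toward $\fl{p}$ as $n$ grows.

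First I would treat the numerator. Since $p$ has the minimum $\fl{\cdot}$ over $P$, every factor satisfies $\fl{p_i}\ge\fl{p}$, i.e. $\f{p_i}\ge \fl{p}\,\log_3 p_i$; summing over $i$ gives $\sum_{i}\f{p_i}\ge \fl{p}\,\log_3 P_k$, and hence $\fp{n}\ge \fl{p}\,\log_3 P_k$. Next I would treat the denominator. Expanding $n=\sum_{i=1}^k r_i P_{i-1}+r P_k$ with $P_0=1$, and using $p_j\ge q$ to bound $P_{i-1}\le P_k\,q^{-(k-i+1)}$, the geometric tail yields $\sum_{i} r_i P_{i-1}<P_k$, while $rP_k\le(q-1)P_k$. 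Therefore $n<qP_k$, so that $\log_3 n<\log_3 P_k+\log_3 q$.

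Combining the two estimates, and noting that $\log_3 q$ is a constant depending only on $P$,
\[\fpl{n}=\frac{\fp{n}}{\log_3 n}\ge \frac{\fl{p}\,\log_3 P_k}{\log_3 P_k+\log_3 q}.\]
The inequality $\fpl{n}<\fl{p}-\epsilon$ then implies $\frac{\fl{p}\,\log_3 P_k}{\log_3 P_k+\log_3 q}<\fl{p}-\epsilon$, which rearranges to $\log_3 P_k<\frac{(\fl{p}-\epsilon)\log_3 q}{\epsilon}$, a bound independent of $n$. Hence $\fpl{n}<\fl{p}-\epsilon$ can hold only when $P_k$ is bounded, and then $n<qP_k$ is bounded as well, leaving only finitely many exceptions. (For the degenerate case $k=0$ one has $n=r<q$, covered automatically by the finiteness claim, and $\fpl{n}\ge 3$ by Proposition~\ref{p_algo_1} makes the statement vacuous once $\fl{p}-\epsilon\le 3$.)

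I expect the only delicate point to be the upper bound $\log_3 n<\log_3 P_k+\log_3 q$: one must verify that the accumulated additive remainders $r_i$ do not inflate $n$ beyond a constant multiple of the product $\prod_i p_i$. This is exactly where the factors $p_j\ge q$ and the resulting geometric decay of the tail $\sum_i r_iP_{i-1}$ are used, and it should present no real difficulty. Everything else is a direct consequence of the minimality of $\fl{p}$ and a limit computation as $P_k\to\infty$.
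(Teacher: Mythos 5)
Your proof is correct, and it takes a genuinely different route from the paper's. The paper follows an idea from Altman and Zelinsky: it introduces the $p$-defect $d_p(n)=\fp{n}-\f{p}\log_p n$, verifies that the two generating operations $qX$ and $X+1$ of the $P$-algorithm never decrease the defect once $X$ exceeds the threshold $\frac{1}{\sqrt[\f{p}]{p}-1}$, and then takes the minimum $D$ of the defects over a finite tree of small numbers to conclude $d_p(n)\geq D$ for all $n$, contradicting $d_p(n)<-\epsilon\log_p n$. You instead read off the nested normal form $n=r_1+p_1(\cdots(r_k+p_k r))$ already set up in the proof of Theorem~\ref{p_algo_weak} and estimate the ratio directly: minimality of $\fl{p}$ gives $\fp{n}\geq\sum\f{p_i}\geq\fl{p}\log_3 P_k$ with $P_k=\prod p_i$, while your geometric-tail estimate (using $P_{i-1}\leq P_k q^{-(k-i+1)}$ and $(q-1)\sum_{j\geq 1}q^{-j}<1$) gives $n<qP_k$, hence $\log_3 n<\log_3 P_k+\log_3 q$; the rest is arithmetic. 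All steps check out, and the $k=0$ case is correctly dismissed as finite. Your argument is arguably sharper and more self-contained: it yields the explicit uniform bound $\fp{n}>\fl{p}\log_3 n-\fl{p}\log_3 q$ for all $n\geq q$, i.e.\ an explicit lower bound $-\fl{p}\log_3 q$ on the defect, whereas the paper's tree argument produces an unspecified constant $D$ and needs the separate monotonicity verification for the $X+1$ step. What the paper's defect machinery buys in exchange is generality: it does not rely on having a closed normal form for the algorithm's output, and it is the standard tool in this literature for lower-bounding logarithmic complexity along recursively generated families.
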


\begin{proof}
Let us consider base $p$ logarithms instead of base 3. Assume the contrary: that for some $\epsilon>0$ there infinitely many numbers $n$ such that
\[\frac{\fp{n}}{\log_p n}<\frac{\f{p}}{\log_p p}-\epsilon=\f{p}-\epsilon,\] 
or, $\fp{n}<\f{p}\log_p n -\epsilon\log_p n$.

Following an idea proposed in \cite{altman}, let us define the ``$p$-defect'' of the number $n$ as follows: \[d_p(n)=\fp{n}-\f{p}\log_p n.\]
It follows from our assumption, that for infinitely many $n$,  $d_p(n)<-\epsilon\log_p n$, i.e., that $p$-defects can be arbitrary small (negative). Let us show that this is impossible.

Each positive integer can be generated by applying of two operations allowed by the $P$-algorithm. Let us consider, how these operations affect $p$-defects of the  numbers involved.

1. The operation $qX$, where $q\in P$. Then $\fp{qX}=\fp{X}+\f{q}$, and:
\[\begin{split}d_p(qX)&=\fp{qX}-\f{p}\log_p qX  \\
&=\f{q}+\fp{X}-\f{p}\log_p q - \f{p}\log_p X \\
&=d_p(X)+\f{q}-\f{p}\log_p q \\
&=d_p(X)+\f{q}\left(1-\frac{\f{p}\log_p q}{\f{q}\log_p p}\right)
\end{split}
.\]
Since $\frac{\f{p}}{\log_p p}\leq\frac{\f{q}}{\log_p q}$, we obtain that $d_p(qX)\geq d_p(X)$, i.e., that the operation $qX$ does not decrease the $p$-defect.

2. The operation $X+1$, where $X+1$  is not divisible by numbers of $P$. Then $\fp{X+1}=\fp{X}+1$, and:
\[\begin{split}d_p(X+1)&=\fp{X+1}-\f{p}\log_p(X+1)=\fp{X}+1-\f{p}\log_p(X+1)\\
&=d_p(X)+\f{p}\log_p(X)+1-\f{p}\log_p(X+1)\\
&=d_p(X)+1-\f{p}\log_p\frac{X+1}{X}
\end{split}
.\]
Hence, if $\f{p}\log_p\frac{X+1}{X}\leq 1$, then we obtain again that $d_p(X+1)\geq d_p(X)$. However, this will be true only, if $\log_p(1+\frac{1}{X})\leq \frac{1}{\f{p}}$, i.e., for all $X\geq \frac{1}{\sqrt[\f{p}]{p}-1}$ the operation $X+1$ does not decrease the $p$-defect.

The $p$-defect of the number $1$ is $d_p(1)=\f{1}-\f{p}\log_p 1 = 1$. Let us generate a tree, labeling its nodes with numbers. At  the root, let us start with the number 1, and, at each node, let us apply to the node's number all the possible operations $qX$ and $X+1$ allowed by $P$-algorithm, thus obtaining each time no more than $|P|+1$ new branches and nodes. Consider a particular branch in this tree: the numbers at its nodes are strongly increasing, but the corresponding $p$-defects may decrease. However, after $\frac{1}{\sqrt[\f{p}]{p}-1}$ levels $p$-defects will stop decreasing. So, in the entire tree, let us drop the nodes at levels greater than $\frac{1}{\sqrt[\f{p}]{p}-1}$. The remaining tree consists of a finite number of nodes, let us denote the minimum of the corresponding $p$-defects by $D$. Then, for all $n$, $d_p(n)\geq D$, which contradicts, for infinitely many $n$,  the inequality $d_p(n)<-\epsilon\log_p n$.
\end{proof}

\pdfbookmark[1]{5. Conclusion}{concl}
\section{Conclusion}
\label{concl}

Let us conclude with the summary of the most challenging \textbf{open problems}:

1) The \textbf{Question of Questions} - prove or disprove Hypothesis \ref{powtwohypo}: for all $n\geq 1$, $\f{2^n}=2n$, moreover, the product of $1+1$'s is shorter than any other representation of $2^n$, even in the basis with subtraction.

2) Basis $\{1,+,\cdot\}$. Prove or disprove the \textbf{weakest possible} Hypothesis \ref{weakhypo} about the spectrum of logarithmic complexity: there is an $\epsilon>0$ such that for infinitely many numbers $n$: $\fl{n} \geq 3+\epsilon$. An equivalent formulation: there is an $\epsilon>0$ such that for infinitely many numbers $n$: $\log_3 e(n) \leq (\frac{1}{3}-\epsilon)n$. Hypothesis \ref{powtwohypo} implies Hypothesis \ref{weakhypo}, so, the latter should be easier to prove?

3) Basis $\{1,+,\cdot, -\}$. Improve Theorem \ref{minus_main}: for all $n>1$,
 \[\fm{n} < 3.679\log_3 n + 5.890.\]

4) Solve the only remaining unsolved question about $P$-algorithms - prove or disprove Hypothesis \ref{p_algo_hypo}: let $q$ be the minimum number in $P$, then, for all $n>1$,
\[\fpl{n} \leq \fl{q}+\frac{q-1}{\log_3 q}.\]
It seems, an interesting number theory could arise here.

\bibliographystyle{splncs03}

\phantomsection
\addcontentsline{toc}{chapter}{References}

\end{document}